\newtheorem{thm}{Theorem}[section]
\newtheorem{conj}[thm]{Conjecture}
\newtheorem{lem}[thm]{Lemma}
\newtheorem{cor}[thm]{Corollary}
\theoremstyle{definition}
\newtheorem{case}{Case}
\newtheorem{subcase}{Subcase}
\newtheorem{clm}{Claim}[case]
\makeatletter \@addtoreset{equation}{section} \makeatother
\def\={\;=\;}
\def\<{\;<\;}
\def\>{\;>\;}
\def\lle{\;\le\;}
\def\gge{\;\ge\;}
\def\ss{\;\subset\;}
\def\sse{\;\subseteq\;}
\def\fl#1{\lfloor{#1}\rfloor}
\def\cl#1{\lceil{#1}\rceil}
\def\Bcl#1{\Bigl\lceil\,{#1}\,\Bigr\rceil}
\def\bggcl#1{\biggl\lceil\,{#1}\,\biggr\rceil}
\def\Bg#1{\Big({#1}\Bigr)}
\def\bgg#1{\biggl({#1}\biggr)}
\def\nf{\frac{n}{4}}
\def\nt{\frac{n}{2}}
\def\M{\mathcal{M}}
\def\rmand{\quad\hbox{ and }\quad}
\crefname{def}{Definition}{Definitions}
\Crefname{def}{Definition}{Definitions}
\crefname{clm}{Claim}{Claims}
\Crefname{clm}{Claim}{Claims}
\crefname{ineq}{Ineq.}{Ineqs.}
\Crefname{ineq}{Inequality}{Inequalities}
\crefname{rl}{Relation}{Relations}
\crefname{rl}{Relation}{Relations}
\crefname{case}{Case}{Cases}
\crefname{case}{Case}{Cases}
\def\mathllapinternal#1#2{\llap{$\mathsurround=0pt#1{#2}$}}
\def\mathrlapinternal#1#2{\rlap{$\mathsurround=0pt#1{#2}$}}
\def\mathllap{\mathpalette\mathllapinternal}
\def\mathrlap{\mathpalette\mathrlapinternal}
\begin{document}

\title[Perfect matchings of semi-regular graphs]
{The maximum number of perfect matchings of semi-regular graphs}

\author[H. Lu]{Hongliang Lu}
\address{School of Mathematics and Statistics, Xi'an Jiaotong university, 710049 Xi'an, P.\ R.\ China}
\email{luhongliang@mail.xjtu.edu.cn}
%\thanks{H. Lu is supported by the National Natural Science Foundation of China, No.\ 11471257.}

\author[D.G.L. Wang]{David G.L. Wang$^{\dag\ddag}$}
\address{
$^\dag$School of Mathematics and Statistics, Beijing Institute of Technology, 102488 Beijing, P.\ R.\ China\\
$^\ddag$Beijing Key Laboratory on MCAACI, Beijing Institute of Technology, 102488 Beijing, P.\ R.\ China}
\email{glw@bit.edu.cn}
%\thanks{D.G.L. Wang is supported by the Beijing Institute of Technology Research Fund Program for Young Scholars.}

\date{}

\keywords{factorization, Hamiltonian graph, perfect matching, regular graph}

\maketitle

\begin{abstract}
Let $n\ge 34$ be an even integer, and $D_n=2\cl{n/4}-1$.
In this paper, we prove that every $\{D_n,\,D_n+1\}$-graph of order~$n$ 
contains~$\cl{n/4}$ disjoint perfect matchings.
This result is sharp in the sense that 
(i) there exists a $\{D_n,\,D_n+1\}$-graph containing exactly $\cl{n/4}$ disjoint perfect matchings, and that
(ii) there exists a $\{D_n-1,\,D_n\}$-graph without perfect matchings for each $n$.
As a consequence, for any integer $D\ge D_n$, 
every $\{D,\,D+1\}$-graph of order~$n$ contains $\cl{(D+1)/2}$ disjoint perfect matchings.
This extends Csaba et~al.'s breathe-taking result that 
every $D$-regular graph of sufficiently large order is $1$-factorizable,
generalizes Zhang and Zhu's result that every $D_n$-regular graph of order~$n$ 
contains $\cl{n/4}$ disjoint perfect matchings,
and improves Hou's result that for all $k\ge n/2$, every $\{k,\,k+1\}$-graph of order~$n$
contains $(\fl{n/3}+1+k-n/2)$ disjoint perfect matchings.
\end{abstract}

\section{Introduction}

Vizing's theorem~\cite{Viz64} states that the edge-chromatic number of any graph 
is equal to or one more than the maximum degree of the same graph.
The problem of determining the precise value of the edge-chromatics number 
for an arbitrary graph is NP-complete; see Holyer~\cite{Hol81}.
For any regular graph, 
its edge-chromatic number equals its maximum degree if and only if the graph is a $1$-factorizable,
i.e., its edge set can be decomposed into perfect matchings.
Here is the famous $1$-factorization \cref{conj:1ftrz}.

\begin{conj}[The $1$-factorization conjecture]\label{conj:1ftrz}
Every regular graph of even order with sufficiently high degree is $1$-factorizable. 
\end{conj}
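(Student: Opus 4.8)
The plan is to prove the statement by reducing a $1$-factorization to a Hamilton decomposition and then establishing the latter for every graph above the degree threshold. Let $G$ be a $d$-regular graph of even order $n$ with $d$ at least the threshold $D_n=2\cl{n/4}-1$, which is $\nt-1$ or $\nt$ according to the residue of $n$ modulo~$4$. The first observation is that a Hamilton cycle of a graph on an even number of vertices splits canonically into two perfect matchings; hence a decomposition of $G$ into $\fl{d/2}$ Hamilton cycles, together with one extra perfect matching when $d$ is odd, already yields a $1$-factorization. So it suffices to exhibit such a decomposition. When $d$ is odd, or when $d$ sits just below $\nt$, I would first peel off a bounded number of perfect matchings—produced by a Tutte/deficiency argument valid at this density—so as to reach an even-degree graph that still retains the structural properties needed below, and then Hamilton-decompose the remainder.

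The heart of the argument is the Hamilton decomposition, and here I would split into two regimes according to whether $G$ is a \emph{robust expander}. In the expanding regime I would run the absorbing method in three phases: reserve at the outset a sparse absorbing subgraph together with edge-disjoint fragments that can later be completed into spanning cycles; then, working in the complement of the reserve, iteratively extract near-spanning cycles until almost every edge has been used, leaving a very sparse and well-controlled remainder; and finally splice that remainder through the absorber so that every reserved fragment closes up into a genuine Hamilton cycle with no edge left behind. Forcing these phases to deliver exactly $\fl{d/2}$ cycles, with all residual degrees driven to zero, is where the analytic work concentrates and is the reason the conclusion requires $n$ large.

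The second regime is the extremal case, where $G$ fails to expand. At this density a non-expanding regular graph must be structurally close to the unique extremal configuration—two nearly complete halves joined across a sparse cut, precisely the shape of the sharpness example $(ii)$ in the abstract. Here I would abandon expansion entirely and decompose $G$ by hand, using the near-independence of the two halves to build the matchings cut by cut and exploiting that the cut is, by the exact choice of $D_n$, just dense enough to route every matching across it. The delicate point is the boundary bookkeeping: the threshold is calibrated so the cut supports exactly $\cl{n/4}$ matchings and no fewer, so the parity and divisibility count must be carried out with no slack.

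The step I expect to be the main obstacle is the Hamilton decomposition of robust expanders: controlling the leftover of the approximate decomposition finely enough that a bounded-size absorber can finish it, while simultaneously guaranteeing the exact cycle count, demands the full strength of the regularity and absorption machinery. The second serious hurdle is bridging the two regimes—proving that every graph above $D_n$ is \emph{either} a robust expander \emph{or} close enough to the extremal example to be handled directly, with nothing falling in the gap between the two treatments.
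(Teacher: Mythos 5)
You have attempted to prove a statement that the paper itself does not prove: \cref{conj:1ftrz} is stated as a conjecture, with its resolution for sufficiently large $n$ (at the sharp threshold $D\ge D_n$) cited to Csaba, K\"uhn, Lo, Osthus and Treglown as \cref{thm:CKLOT}; the paper's own contribution is the different, elementary result \cref{thm:main} on $\{D_n,D_n+1\}$-graphs, proved via Tutte--Berge, Hall, Dirac and Ore. What you have written is a recognizable pr\'ecis of the cited proof's architecture (robust-expander dichotomy, absorption, extremal analysis), but it is a plan, not a proof: every decisive step is delegated --- ``I would run the absorbing method,'' ``a non-expanding regular graph must be structurally close to the unique extremal configuration'' --- and those steps are precisely the memoir-length content of the cited work. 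You even flag the two hardest points (finishing the absorber with an exact cycle count, and proving the dichotomy has no gap) as open obstacles, which is an admission that the argument is not carried out.

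Beyond incompleteness, there is a concrete error in your extremal regime. For $n\equiv 0\pmod 4$ one has $D_n=\nt-1$, and the disjoint union of two copies of $K_{n/2}$ (with $n/2$ even) is a $D_n$-regular graph of even order that is $1$-factorizable --- each even clique factorizes --- yet contains no Hamilton cycle whatsoever. So the reduction ``peel off boundedly many perfect matchings, then Hamilton-decompose the remainder'' cannot work uniformly at this threshold, and your claim that the sparse cut is ``just dense enough to route every matching across it'' is false: in this extremal configuration the cut may be empty and every $1$-factor lies inside the halves. (When the halves are odd, parity instead forces each matching to use an odd number of cut edges; and there is a second extremal shape, close to complete bipartite, with entirely different routing --- your analysis acknowledges only one.) You also miscalibrate the target: a $1$-factorization must decompose all the edges into $d$ perfect matchings, whereas your count of ``exactly $\cl{n/4}$ matchings and no fewer'' describes the paper's \cref{thm:main} and \cref{thm:sharp1} about disjoint perfect matchings, not the conjecture. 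As it stands, the proposal neither proves the conjecture nor matches any argument the paper contains.
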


It is considered to be Chetwynd and Hilton who first stated that \cref{conj:1ftrz} explicitly,
though they~\cite{CH85} claimed that the conjecture had been discussed in the 1950s, according to Dirac.
They showed that every graph of even order $n$ with minimum degree at least $6n/7$ is $1$-factorizable.
This bound was improved to \hbox{$(\sqrt{7}-1)n/2$} later, 
by the same authors~\cite{CH89}, and Niessen and Volkmann~\cite{NV90} independently.
Plantholt and Tipnis~\cite{PT91} further generalized this bound to multigraphs.
Focusing on $k$-regular graphs with $k\ge n/2$,
Hilton~\cite{Hil85} managed to peel off~$\fl{k/3}$ disjoint $1$-factors depending on the graph degree.
Remarkably, Zhang and Zhu~\cite{ZZ92} improved the bound $\fl{n/3}$ to a sharp one.

\begin{thm}[Zhang and Zhu]\label{thm:ZZ}
Any $k$-regular graph of even order $n$ such that $k\ge n/2$ contains at least $\fl{k/2}$ disjoint perfect matchings.
\end{thm}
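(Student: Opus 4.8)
The plan is to argue by contradiction, extracting disjoint perfect matchings greedily and showing the process cannot halt before $\fl{k/2}$ of them have been found. First I would record the base fact that a single perfect matching exists: since $G$ is $k$-regular with $k\ge\nt$, its minimum degree is at least $\nt$, so by Dirac's theorem $G$ has a Hamilton cycle, and as $n$ is even the alternating edges of that cycle split into two disjoint perfect matchings. This both starts the process and shows the bound is attainable.

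Suppose then that $\{M_1,\dots,M_t\}$ is a maximum family of pairwise disjoint perfect matchings with $t\le\fl{k/2}-1$, and set $H=G-(M_1\cup\dots\cup M_t)$. Then $H$ is $d$-regular with $d=k-t\ge\cl{k/2}+1>\nf$, and by maximality $H$ has no perfect matching. By Tutte's theorem there is a set $S\subseteq V(H)$, with $|S|=s$, such that $o(H-S)>s$, where $o$ counts odd components; since $n$ is even, a parity check forces $o(H-S)=q\ge s+2$. Let $C_1,\dots,C_q$ be the odd components of $H-S$ and write $m_i=|C_i|$.

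The core of the argument is an edge count between $S$ and these components. Inside $H$ every vertex of $C_i$ has at most $m_i-1$ neighbours in $C_i$, so $C_i$ sends at least $m_i(d-m_i+1)=d+(m_i-1)(d-m_i)$ edges of $H$ to $S$; hence every odd component with $1\le m_i\le d$ sends at least $d$ such edges. Since at most $ds$ edges of $H$ leave $S$, the number of these ``small'' components is at most $s$. A component with $m_i>d$ has more than $\nf$ vertices, so at most three of the $C_i$ can be ``large'', which already pins $q$ into the narrow window $s+2\le q\le s+3$.

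The main obstacle, and the step I expect to require the genuine work, is excluding these few large odd components. Here the mere degree of $H$ is insufficient, since a generic $d$-regular graph with $d$ near $\nf$ can fail to have a perfect matching; so I would exploit that $H$ is a spanning subgraph of the $k$-regular graph $G$ with $k\ge\nt$ and that the deleted edges $F=G-H$ form $t$ perfect matchings of $G$. Each $M_j$ meets every odd $C_i$ in a non-perfect matching of an odd set, hence pushes at least one edge out of $C_i$; with $q\ge s+2>s$ components and only $s$ vertices in $S$ to absorb these edges, the matchings $M_j$ are forced to create edges of $G$ running directly between distinct components $C_i$. Tracking these inter-component matching edges, together with the $k$-regularity bounding the total edges leaving $S$ by $ks$, I would run a short case analysis on the at most three large components to contradict $q\ge s+2$. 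This contradiction shows $H$ has a perfect matching after all, so $\{M_1,\dots,M_t\}$ was not maximal, and the extraction continues until $\fl{k/2}$ disjoint perfect matchings are obtained.
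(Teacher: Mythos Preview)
The paper does not give its own proof of this theorem; it is quoted as Zhang and Zhu's result, and the paper's contribution is \cref{thm:main}, which (via \cref{cor:main}) recovers it for $n\ge34$. So the relevant comparison is with the method used for \cref{thm:main}.

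Your setup is sound through the point where you trap $q$ in $\{s+2,s+3\}$ and observe that each deleted matching $M_j$ must place at least one edge directly between two distinct components $C_i$. The gap is in your final step. You write that a case analysis on the large components will ``contradict $q\ge s+2$'', but that is not where the contradiction lives. The Tutte set $S$ with $q\ge s+2$ odd components genuinely exists in $H$ --- that is precisely the content of $H$ having no perfect matching --- and no edge count in $G$ or in the $M_j$ will make it vanish. Configurations with one, two, or three large odd components really do arise; the paper's proof does not exclude them but instead pins the structure down (for $s\ge2$: exactly one large component $C_q$ and all other $C_i$ singletons; for $s=1$: three mid-sized components; for $s=0$: two components).

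The mechanism you are missing, and the core of both Zhang--Zhu's argument and the paper's, is a \emph{swap}: select a single $M_0\in\M$ with sufficiently many edges crossing between the pieces, and then construct \emph{two} disjoint perfect matchings of the graph $H\cup M_0$. Replacing $M_0$ by this pair enlarges the family by one, contradicting the maximality of $\M$ --- not contradicting $q\ge s+2$. Building those two matchings is where the real work lies (Hamiltonian-connectedness of the large pieces via \cref{cor:Ore}, a Hall-type lemma such as \cref{lem:Hall} for the bipartite graph between $S$ and the singleton components, and delicate bookkeeping in the $s\le1$ cases), and your sketch does not arrive at this idea.
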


Very recently, Csaba et al.~\cite{CKLOT14X} obtained the following astonishing breakthrough.
Let $n$ be an even integer and define
\begin{equation}\label[def]{def:D}
D_n\=2\Bcl{\nf}-1\=\begin{cases}
\displaystyle \nt-1,&\text{if $n\equiv 0\pmod4$};\\[8pt]
\displaystyle \nt,&\text{if $n\equiv 2\pmod4$}.
\end{cases}
\end{equation}

\begin{thm}[Csaba et al.]\label{thm:CKLOT}
Let $n$ be a sufficient large even integer, and let $D\ge D_n$. 
Then every $D$-regular graph $G$ of order $n$ is $1$-factorizable. In other words,
the edge-chromatic number~$\chi'(G)$ equals the degree~$D$.
\end{thm}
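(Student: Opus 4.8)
The plan is to reduce the assertion, away from a few extremal configurations, to a statement about Hamilton decompositions. Because $n$ is even, every Hamilton cycle of $G$ splits into two perfect matchings, so a decomposition of the edges of $G$ into $\fl{D/2}$ edge-disjoint Hamilton cycles --- together with a single perfect matching when $D$ is odd --- immediately produces $D$ disjoint perfect matchings, hence a $1$-factorization. Since $D_n\approx\nt$, the whole range of degrees sits just above the threshold $\nt$, which is precisely where the extremal graphs live: two disjoint near-cliques of order about $\nt$ (which need not even be connected, so have no global Hamilton cycle) and the balanced complete-bipartite-like graphs. I would isolate these, handle them by factorising each part internally, and split the remaining, non-extremal argument at a level $D\gge(1/2+\epsilon)n$ against the critical window $\nt\lle D\<(1/2+\epsilon)n$.

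In the dense regime $D\gge(1/2+\epsilon)n$ I would invoke the theory of robust expanders. Call $G$ a robust $(\nu,\tau)$-expander if every vertex set $S$ with $\tau n\lle|S|\lle(1-\tau)n$ has at least $|S|+\nu n$ vertices each adjacent to at least $\nu n$ members of $S$. The engine is the robust decomposition lemma of K\"uhn and Osthus, which guarantees a Hamilton decomposition of any $D$-regular robust expander of linear degree. The remaining task is then purely structural: to show that a $D$-regular graph with $D\gge(1/2+\epsilon)n$ that lies outside an $\epsilon n^2$ neighbourhood of the extremal configurations is automatically a robust expander. I would prove this contrapositively --- a set $S$ witnessing a failure of expansion, combined with the degree constraints imposed by $D$-regularity, forces almost all edges to lie inside $S$ and inside its complement (the two-clique picture) or across a near-balanced bipartition (the bipartite picture).

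The remaining, and genuinely hard, part is the critical window $D\approx\nt$ together with the extremal cases, where the robust-expander machinery does not apply directly. When $G$ is $\epsilon$-close to two disjoint cliques I would first extract a small family of Hamilton cycles absorbing the few edges that cross between the two parts, thereby regularising each part, and then decompose each part internally; the bipartite-like case is treated by the analogous bipartite robust decomposition lemma. The principal obstacle throughout is the robust decomposition lemma itself, whose proof marries Szemer\'edi-type regularity with an absorbing strategy that reserves in advance a sparse ``flexible'' subgraph able to swallow the leftover of any approximate decomposition, all while tracking parity so that an odd degree $D$ leaves behind exactly one perfect matching. Making the extremal analysis dovetail cleanly with the expander analysis at $D\approx\nt$ is the most delicate point, and it is this junction that accounts for the sharpness of the bound $D_n$.
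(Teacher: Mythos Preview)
The paper does not contain a proof of this statement: \cref{thm:CKLOT} is quoted as a known result of Csaba, K\"uhn, Lo, Osthus and Treglown~\cite{CKLOT14X}, with no argument given here. It serves only as background for the authors' own \cref{thm:main}, which concerns the semi-regular case. So there is no ``paper's own proof'' to compare against.

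That said, your sketch is a faithful high-level outline of the actual Csaba--K\"uhn--Lo--Osthus--Treglown strategy: split into a non-extremal robust-expander case (handled via the K\"uhn--Osthus robust decomposition lemma) and two extremal cases (close to two cliques, close to complete bipartite), with the extremal cases handled by first absorbing the exceptional edges with a few carefully chosen Hamilton cycles and then decomposing the regularised pieces. As an outline it is accurate; as a proof it is of course only a plan, since the robust decomposition lemma and the extremal analyses each occupy dozens of pages in~\cite{CKLOT14X}. If your intent was to supply an independent proof rather than a summary of the cited one, be aware that none of the three pieces you name (the robust decomposition lemma, the clique-extremal absorption, the bipartite-extremal absorption) is short or routine, and your proposal does not yet contain any of the actual arguments.
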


For any set $S$ of non-negative integers, we call a graph {\em $S$-regular}, or an {\em $S$-graph},
if the degree of every its vertex belongs to $S$.
Following Akiyama and Kano's book~\cite[Section~5.2]{AK11B}, 
we call an $S$-graph {\em semi-regular} if the set $S$ consists of two adjacent integers.
Yet another perspective, 
Hou~\cite{Hou11} generalized Hilton's result to semi-regular graphs.

\begin{thm}[Hou]\label{thm:Hou}
Every $\{k,\,k+1\}$-graph of even order $n\le 2k$ contains at least $(\fl{n/3}+1+k-n/2)$ disjoint perfect matchings.
\end{thm}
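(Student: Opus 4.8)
The plan is to peel perfect matchings off $G$ one at a time and to control, through Tutte's theorem, how far this can be pushed. Put $r=\fl{n/3}+1+k-\nt$; since removing a perfect matching lowers every degree by exactly $1$, after deleting $j$ matchings the remaining graph $G_j$ is a $\{k-j,\,k-j+1\}$-graph, and it suffices to keep extracting until $j=r$, i.e.\ until the minimum degree has descended to $d_0:=\nt-\fl{n/3}$. The reduction I would aim for is therefore: \emph{as long as the current graph $G_{j}$ is a $\{d,\,d+1\}$-graph with $d\gge d_0$ and is free of a sparse odd-cut, it possesses a perfect matching whose deletion preserves that same freeness.} The even order of $G$ and the hypothesis $n\lle 2k$ enter precisely in guaranteeing that the process starts from, and never leaves, this barrier-free regime.

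For the existence half I would run the standard Tutte double count on a hypothetical barrier. If a $\{d,\,d+1\}$-graph $H$ has no perfect matching, there is $S\sse V(H)$ with $o(H-S)\gge|S|+2$; write $s=|S|$ and let $C_1,\dots,C_q$ ($q\gge s+2$) be the odd components, of orders $c_i$. Counting edges between $S$ and the $C_i$ from below by the minimum degree gives $e_H(S,C_i)\gge c_i(d-c_i+1)$, while the maximum degree gives $\sum_i e_H(S,C_i)\lle s(d+1)$. Because $c(d-c+1)\gge d$ for $1\lle c\lle d$, the upper bound shows that $S$ cannot be adjacent to many small components; hence almost all of the $q$ components are near-cliques of order at least $d+1$, and $\sum_i c_i\lle n-s$ forces $d+1\lle (n-s)/(s+2)$, that is $d\lle(n-s)/(s+2)-1$. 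Once $s$ is pinned into the right range this falls below $d_0$, contradicting $d\gge d_0$ and producing the desired matching.

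The step I expect to be the real obstacle is controlling the barrier size $s$, equivalently ruling out the two degenerate families that defeat the local count. When $s=0$ the graph may split into two odd cliques $K_{d+1}$, allowing $d$ as large as $\nt-1$; and for large $s$ an unbalanced, near-complete-bipartite barrier allows $d$ up to about $\nf$. Both lie above $d_0$, so no argument that inspects a single intermediate graph in isolation can work — indeed a \emph{maximal} family of disjoint matchings can stall at only $k-\nt+1$ of them. The resolution must be to choose the matchings so that each offending odd-cut is crossed and thereby destroyed, that is, to maintain the no-sparse-odd-cut invariant along the whole descent from degree $k+1$ down to $d_0$; verifying that a matching respecting this invariant always exists, using $k\gge\nt$ and the even order to dismantle the extremal barriers above, is the crux on which the whole argument turns.
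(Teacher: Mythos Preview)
This theorem is not proved in the paper; it is quoted as a prior result of Hou~\cite{Hou11}, which the paper's main \cref{thm:main} then sharpens. So there is no in-paper proof to compare against.

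On its own terms, your proposal is an outline that correctly locates the difficulty but does not resolve it. You observe that greedy peeling can stall after only $k-\nt+1$ matchings, because the residual $\{d,d+1\}$-graph may split into two odd near-cliques while $d$ is still as large as $\nt-1\gge d_0$. Your proposed fix is to maintain a ``no-sparse-odd-cut'' invariant by choosing each matching to cross the dangerous cuts, but you neither state the invariant precisely nor prove that a matching preserving it exists at every step. Since you yourself call this ``the crux on which the whole argument turns,'' what you have written is a plan, not a proof. Concretely: at the moment the greedy process first produces a graph $H$ that is two odd cliques, nothing in your write-up explains which earlier matching to revise, or why the revision is possible without creating a different barrier.

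The device that actually closes this gap---both in Hou's paper and in the present paper's proof of \cref{thm:main}---is not a forward invariant but a backward swap: take a \emph{maximum} family~$\M$ of disjoint perfect matchings, set $H=G-\M$, apply \cref{thm:Berge} to~$H$, and then exhibit some $M_0\in\M$ together with two disjoint perfect matchings of $H\cup M_0$, contradicting maximality of~$\M$. The Tutte double-count you sketch is used inside that argument to control the barrier structure of~$H$, not to certify each intermediate $G_j$ separately. If you want to complete your approach, that swap step is what you must supply.
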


In this paper, we consider the $1$-factorization problem of semi-regular graphs.
We improve Hou's \cref{thm:Hou} to the sharp result that 
every $\{D_n,\,D_n+1\}$-graph of even order $n\ge34$ contains $\cl{n/4}$ disjoint perfect matchings; see \cref{thm:main}.
This result generalizes Zhang and Zhu's \cref{thm:ZZ}
and extends Csaba et al.'s \cref{thm:CKLOT}.

\section{Preliminary}

In this paper, we consider finite undirected simple graphs without loops or multiple edges. 
The number of vertices in a graph~$G$ is said to be the order of~$G$, denoted~$|G|$.
As usual, we denote
the neighbor set of a vertex subset~$W$ of~$G$ by~$N_G(W)$, or simply~$N(W)$ if there is no confusion.
One of the earliest corner-stones in the matching theory is Hall's theorem~\cite{Hal35}.

\begin{thm}[Hall]\label{thm:Hall}
Let $G=(X,Y)$ be a bipartite graph. Then $G$ has a matching covering~$X$ if and only if
$|W|\le |N(W)|$ for every subset $W$ of $X$.
\end{thm}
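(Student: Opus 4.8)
The plan is to treat the two implications separately, handling necessity by a one-line counting argument and sufficiency by induction on $|X|$. For necessity, suppose $M$ is a matching of $G$ covering $X$. Given any $W \subseteq X$, the matching $M$ assigns to each vertex of $W$ a distinct partner in $Y$, and every such partner lies in $N(W)$; hence $|N(W)| \ge |W|$, which is exactly the stated condition. All the work is in the converse.

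For sufficiency I would induct on $|X|$, the case $|X| \le 1$ being immediate. For the inductive step I would split according to whether Hall's condition holds with slack everywhere or is tight on some set. First consider the case in which every nonempty proper subset $W$ of $X$ satisfies the strict inequality $|N(W)| > |W|$. Here I would pick any vertex $x \in X$ together with a neighbor $y \in N(\{x\})$ (which exists since $|N(\{x\})| \ge 1$), match them, and pass to $G' = G - x - y$ with left part $X \setminus \{x\}$. Deleting $y$ lowers each neighborhood by at most one, so for every $W \subseteq X \setminus \{x\}$ the strict surplus gives $|N_{G'}(W)| \ge |N_G(W)| - 1 \ge |W|$. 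Thus Hall's condition survives in $G'$, and the induction hypothesis produces a matching of $G'$ covering $X \setminus \{x\}$; together with the edge $xy$ this covers $X$.

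The remaining case is that some nonempty proper subset $W$ of $X$ is \emph{tight}, meaning $|N(W)| = |W|$. I would split $G$ into the bipartite graph on $\bg{W,\,N(W)}$ and the bipartite graph $G_2$ on $\bg{X \setminus W,\; Y \setminus N(W)}$. Hall's condition is inherited verbatim by the first graph, so induction supplies a matching covering $W$ inside $N(W)$. The key point is that Hall's condition also holds for $G_2$: for any $U \subseteq X \setminus W$, the set $N_{G_2}(U) = N_G(U) \setminus N(W)$ is disjoint from $N(W)$, so tightness yields $|W| + |N_{G_2}(U)| = |N_G(W \cup U)| \ge |W \cup U| = |W| + |U|$, whence $|N_{G_2}(U)| \ge |U|$. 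Since $W$ is nonempty and proper, both $|W|$ and $|X \setminus W|$ are strictly less than $|X|$, so induction applies to $G_2$ as well and yields a matching covering $X \setminus W$; the union of the two matchings covers $X$.

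The main obstacle is the bookkeeping in the tight case: one must use the single set $W$ to certify Hall's condition for \emph{both} pieces simultaneously, taking care that $N(W)$ and $N_{G_2}(U)$ are disjoint so that the cardinalities add exactly, and that the nonemptiness and properness of $W$ make both halves strictly smaller so the induction hypothesis genuinely applies.
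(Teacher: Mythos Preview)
Your proof is correct and is the standard inductive argument for Hall's theorem. Note, however, that the paper does not prove this statement at all: it is quoted as a classical result with a reference to Hall's original 1935 paper, and is used only as a tool in the proof of \cref{lem:Hall}. So there is no ``paper's own proof'' to compare against; you have supplied a valid proof where the paper simply cites one.
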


The famous Tutte's theorem~\cite{Tutte} states that
a graph $G$ has a perfect matching if and only if for any vertex subset $S$,
the number of odd components of the graph $G-S$ is at most the order $|S|$.
In this paper, we will use the following stronger version of Tutte's theorem, 
see Lov\'asz and Plummer's book~\cite[Exercise 3.3.18~(b)]{LP86B}.
A graph~$G$ is said to be {\em factor-critical} if the subgraph $G-u$ has a perfect matching for every vertex~$v$.

\begin{thm}\label{thm:Berge}
Let $G$ be a graph without perfect matchings.
Then~$G$ has a vertex subset~$S$ such that every component of the subgraph $G-S$ is factor-critical,
and that the number $o(G-S)$ of components of the subgraph $G-S$ satisfies 
\[
o(G-S)\equiv |S|\pmod{2}
\rmand
o(G-S)\gge |S|+2.
\]
\end{thm}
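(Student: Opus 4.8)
The plan is to derive this from the deficiency form of Tutte's theorem by induction on the order $|G|$. For a graph $H$ write $\mathrm{def}(H)=\max_S\bigl(o(H-S)-|S|\bigr)$, the maximum ranging over all vertex subsets $S$. A count of vertices shows $o(H-S)-|S|\equiv|H|\pmod2$, so the displayed congruence $o(G-S)\equiv|S|\pmod2$ can hold only when $|G|$ is even; I therefore assume $|G|$ is even. Then $o(G-S)-|S|$ is always even, while Tutte's theorem supplies, for the perfect-matching-free graph $G$, a set with $o(G-S)>|S|$; together these force $\mathrm{def}(G)\gge2$. I would fix $S$ to be a subset attaining $o(G-S)-|S|=\mathrm{def}(G)$ and, among all such subsets, of largest cardinality. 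For this $S$ the two relations in the statement are immediate: the congruence holds because $|G|$ is even, and $o(G-S)\gge|S|+2$ is exactly $\mathrm{def}(G)\gge2$.

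It remains to prove that every component of $G-S$ is factor-critical. Suppose not, and choose a component $C$ of $G-S$ together with a vertex $v\in V(C)$ for which $C-v$ has no perfect matching. Since $C-v$ has strictly smaller order than $G$ and no perfect matching, the inductive hypothesis yields a set $T\sse V(C-v)$ such that every component of $(C-v)-T$ is factor-critical and $o\bigl((C-v)-T\bigr)\gge|T|+2$. I would then examine $S'=S\cup\{v\}\cup T$. Deleting $S'$ leaves untouched the at least $o(G-S)-1$ odd components of $G-S$ other than $C$, and replaces $C$ by the components of $(C-v)-T$, whence
\[
o(G-S')\gge\bigl(o(G-S)-1\bigr)+\bigl(|T|+2\bigr)\=o(G-S)+|T|+1.
\]
As $|S'|=|S|+|T|+1$, this gives $o(G-S')-|S'|\gge o(G-S)-|S|=\mathrm{def}(G)$, so $S'$ also attains the deficiency yet is strictly larger than $S$, contradicting the choice of $S$. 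Hence every component of $G-S$ is factor-critical, and in particular each has odd order.

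The main obstacle I anticipate is the careful setup of the induction rather than any single computation: one must verify that $C-v$ legitimately invokes the inductive hypothesis, having strictly smaller order and no perfect matching, and must track the parity bookkeeping that upgrades the strict Tutte bound $\mathrm{def}(G)\gge1$ to the even bound $\mathrm{def}(G)\gge2$. Once the extremal $S$ is chosen, factor-criticality of the components and the two displayed relations fall out in tandem, since a factor-critical graph has odd order and therefore contributes to $o(G-S)$.
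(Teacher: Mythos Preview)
The paper does not prove this statement; it simply cites it as Exercise~3.3.18(b) in Lov\'asz and Plummer's book, so there is no in-paper argument to compare against. Your inductive proof via a deficiency-attaining set $S$ of maximum cardinality is a standard and valid route, but there is one gap you should close before the inductive step.

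Your inductive hypothesis applies only to graphs of even order, yet when you select a non-factor-critical component $C$ and a vertex $v$ with $C-v$ lacking a perfect matching, you have not ruled out the possibility that $C$ itself has even order; in that case $|C-v|$ is odd and you cannot invoke the hypothesis on $C-v$. The repair is immediate and uses only the maximality of $|S|$: if $G-S$ had an even component $C$, take any $v\in V(C)$. Since $|C-v|$ is odd, at least one component of $C-v$ is odd, while all the (odd) components of $G-S$ other than $C$ persist, so $o\bigl(G-(S\cup\{v\})\bigr)\ge o(G-S)+1$. As $|S\cup\{v\}|=|S|+1$, the set $S\cup\{v\}$ also attains $\mathrm{def}(G)$ but is strictly larger than $S$, a contradiction. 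Once this is inserted, every component of $G-S$ is odd, $C-v$ has even order, and your induction goes through exactly as written.
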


We also need some known results judging the graph structure with aid of the minimum degree.
A graph that contains a Hamiltonian cycle is called {\em Hamiltonian}.
Next is a classical criterion for graph Hamiltonicity due to Dirac~\cite{Dir52}.

\begin{thm}[Dirac]\label{thm:Dirac}
Every graph with minimum degree at least half of its order is Hamiltonian.
\end{thm}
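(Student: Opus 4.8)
The plan is to prove Dirac's \cref{thm:Dirac} by the classical longest-path argument, driven entirely by the degree bound. Write $n=|G|$ and assume $n\gge3$ and that the minimum degree $\delta(G)\gge\nt$. First I would record that $G$ is connected: were it disconnected, its smallest component would contain at most $\nt$ vertices, and any vertex of that component would then have degree strictly below $\nt$, contradicting the hypothesis.

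Next, let $P=v_0v_1\cdots v_k$ be a longest path in $G$. By maximality of $P$, every neighbor of the endpoint $v_0$ and every neighbor of the endpoint $v_k$ already lies on $P$, for otherwise $P$ could be lengthened. The crux is to extract from $P$ a cycle on the same vertex set. To this end I introduce the index sets
\[
A\=\{\,i:v_0v_i\in E(G)\,\}
\rmand
B\=\{\,i:v_kv_{i-1}\in E(G)\,\},
\]
both contained in $\{1,2,\ldots,k\}$. Since all neighbors of $v_0$ and of $v_k$ sit on $P$, we have $|A|=\deg(v_0)\gge\nt$ and $|B|=\deg(v_k)\gge\nt$, whence $|A|+|B|\gge n$. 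As the ground set $\{1,\ldots,k\}$ has $k\lle n-1$ elements, the pigeonhole principle yields an index $i\in A\cap B$. For such an $i$ both $v_0v_i$ and $v_kv_{i-1}$ are edges, so
\[
v_0\,v_1\cdots v_{i-1}\,v_k\,v_{k-1}\cdots v_i\,v_0
\]
is a cycle $C$ spanning precisely the vertices of $P$.

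It remains to promote $C$ to a Hamiltonian cycle. If some vertex $w$ of $G$ lay outside $C$, then connectivity would furnish an edge joining $w$ to a vertex $u$ of $C$; deleting a suitable edge of $C$ at $u$ and attaching $w$ would produce a path on $k+2$ vertices, exceeding the length of $P$. This contradiction shows that $C$ meets every vertex, so $G$ is Hamiltonian.

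I expect the pigeonhole step to be the only genuinely delicate point: the two neighbor sets must be indexed against each other with the correct shift (here $v_0$ against $v_i$ but $v_k$ against $v_{i-1}$) so that a shared index $i$ splices the two path-segments into an honest cycle rather than leaving two crossing chords. One should also check the degenerate indices $i\in\{1,k\}$, where the cycle reduces to $P$ closed by a single chord. Once this bookkeeping is in place, the bound $\delta(G)\gge\nt$ forces $|A|+|B|\>k$ and the argument closes.
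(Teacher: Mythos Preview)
Your argument is the standard longest-path proof of Dirac's theorem and is correct as written; the index shift in $B$ is handled properly, the pigeonhole count $|A|+|B|\gge n>k$ is sound, and the final step using connectivity to rule out vertices outside $C$ is fine. Note, however, that the paper does not actually prove \cref{thm:Dirac}: it is quoted as a classical result with a citation to Dirac~\cite{Dir52}, so there is no ``paper's own proof'' to compare against. Your write-up would serve perfectly well as a self-contained justification, though for the purposes of this paper a citation suffices.
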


A graph is said to be {\em Hamiltonian-connected} 
if it contains a Hamiltonian path between every two distinct vertices.
Ore~\cite{Ore63} discovered a criterion for this stronger property.

\begin{thm}[Ore]\label{thm:Ore}
Let $G$ be a $2$-connected graph. 
Suppose that the degree sum of every two non-adjacent vertices of $G$ is larger than the order $|G|$.
Then $G$ is Hamiltonian-connected.
\end{thm}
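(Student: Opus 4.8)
The plan is to reduce the Hamiltonian-connectedness of $G$ to an ordinary Hamiltonicity statement by the vertex-addition trick, and then to prove that Hamiltonicity by a longest-path rotation argument in the spirit of Ore's original proof. Fix two distinct vertices $s$ and $t$ of $G$, and form the graph $H$ by adjoining one new vertex $w$ whose only neighbours are $s$ and $t$, so that $|H|=|G|+1$. Since $\deg_H(w)=2$, any Hamiltonian cycle of $H$ must traverse the two edges $ws$ and $wt$, and deleting $w$ from such a cycle leaves exactly a Hamiltonian $s$--$t$ path of $G$. Hence it suffices to prove that $H$ is Hamiltonian; as $s$ and $t$ are arbitrary, this yields the theorem.

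First I would record the degree information in $H$. For every pair $x,y$ of old vertices that is non-adjacent in $H$ (equivalently, in $G$) the hypothesis gives $\deg_H(x)+\deg_H(y)\ge\deg_G(x)+\deg_G(y)\ge|G|+1=|H|$, while $G$ being $2$-connected makes $H$ connected. The only pairs escaping the bound $|H|$ are those involving the degree-$2$ vertex $w$, and managing this single bad vertex is exactly the crux of the proof.

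To show $H$ is Hamiltonian, take a longest path $P=u_0u_1\cdots u_k$ in $H$. If $w$ happens to be an endpoint, say $u_0=w$ with neighbours $s=u_1$ and $t=u_j$, I would reroute along $u_{j-1}\cdots u_1\,w\,u_j\cdots u_k$ to obtain a path of the same length whose two endpoints both lie in $V(G)$; thus I may assume $u_0,u_k\in V(G)$. By maximality of $P$, all neighbours of $u_0$ and of $u_k$ lie on $P$. If $u_0u_k$ is an edge, then $u_0u_1\cdots u_ku_0$ is a cycle through $V(P)$; otherwise the sets $\{i:u_0u_i\in E\}$ and $\{i:u_ku_{i-1}\in E\}$ are subsets of $\{1,\dots,k\}$ whose sizes sum to $\deg_H(u_0)+\deg_H(u_k)\ge|H|>k$, so they meet at some index $i$, and $u_0u_iu_{i+1}\cdots u_ku_{i-1}u_{i-2}\cdots u_1u_0$ is again a cycle $C$ spanning $V(P)$. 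Finally, were $C$ not already Hamiltonian, connectedness of $H$ would supply a vertex off $C$ adjacent to $C$, producing a path longer than $P$ and contradicting its maximality. Hence $C$ is a Hamiltonian cycle of $H$, completing the reduction.

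The main obstacle, as flagged above, is the lone degree-$2$ vertex $w$: it prevents a direct appeal to any global Ore-type degree condition on $H$, and it is precisely what makes the fixed-endpoint version subtler than plain Hamiltonicity. The rerouting step, which pushes $w$ into the interior of the longest path so that the degree bound is only ever invoked at genuine old-vertex endpoints, is what neutralises this difficulty; the $2$-connectedness hypothesis enters only to guarantee that $H$ is connected, so that the final extension step is available.
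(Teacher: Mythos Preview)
The paper does not prove this statement at all: Theorem~\ref{thm:Ore} is quoted as a classical result of Ore and used as a black box, with no argument supplied. So there is no in-paper proof against which to compare your attempt.

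That said, your argument is correct and follows a standard route. The vertex-addition reduction---adjoining a degree-$2$ vertex $w$ adjacent to $s$ and $t$ so that Hamiltonian cycles of $H$ correspond to Hamiltonian $s$--$t$ paths of $G$---is exactly the right device, and the longest-path rotation argument closes the path into a spanning cycle once the endpoints are known to be old vertices. Your rerouting step is the key point and it works: if $w=u_0$, then with $u_1\in\{s,t\}$ and the other neighbour at $u_j$ for some $j\ge2$, the path $u_{j-1}\cdots u_1\,w\,u_j\cdots u_k$ has the same vertex set, is therefore still longest, and has both endpoints in $V(G)$. After that, for non-adjacent old endpoints one has $\deg_H(u_0)+\deg_H(u_k)\ge\deg_G(u_0)+\deg_G(u_k)\ge|G|+1=|H|\ge k+1$, so the two index sets in $\{1,\dots,k\}$ must intersect, yielding the spanning cycle; connectedness of $H$ then forces it to be Hamiltonian.

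One minor remark: you invoke the $2$-connectedness of $G$ only to conclude that $H$ is connected, which is all the final extension step needs; plain connectedness of $G$ would already give this. So in your argument the hypothesis is used rather weakly---harmless, of course, since $2$-connectedness implies connectedness.
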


Note that every Hamiltonian graph is $2$-connected.
With aid of Dirac's \cref{thm:Dirac}, the following corollary of \cref{thm:Ore} holds true.
See also~\cite[10.24]{Lov79B}.

\begin{cor}\label{cor:Ore}
Any graph $G$ of minimum degree more than $|G|/2$ is Hamiltonian-connected.
\end{cor}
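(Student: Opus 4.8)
The plan is to deduce \cref{cor:Ore} from Ore's \cref{thm:Ore} by verifying its two hypotheses from the single, stronger assumption that $\delta(G)>|G|/2$. First I would check $2$-connectivity. Since the minimum degree exceeds $|G|/2\ge(|G|-1)/2$, it exceeds $|G|/2$, so by Dirac's \cref{thm:Dirac} the graph $G$ is Hamiltonian, and as noted immediately before the statement, every Hamiltonian graph is $2$-connected. This disposes of the connectivity hypothesis of \cref{thm:Ore} with essentially no work.

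The remaining task is the degree-sum condition. Let $u$ and $v$ be any two non-adjacent vertices of $G$. Then their degrees satisfy $\deg(u)>|G|/2$ and $\deg(v)>|G|/2$, whence the degree sum obeys
\[
\deg(u)+\deg(v)\;>\;\frac{|G|}{2}+\frac{|G|}{2}\=|G|.
\]
Thus the degree sum of every pair of non-adjacent vertices strictly exceeds $|G|$, which is exactly the hypothesis required by \cref{thm:Ore}. With both hypotheses of Ore's theorem now in hand, I would conclude directly that $G$ is Hamiltonian-connected, completing the proof.

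I do not expect any genuine obstacle here: the corollary is a routine strengthening-by-hypothesis argument, in which the uniform bound $\delta(G)>|G|/2$ simultaneously forces $2$-connectivity (via Dirac) and the pairwise degree-sum inequality. The only point meriting a moment of care is the strictness of the inequalities—one must ensure the degree bound is \emph{strictly} greater than $|G|/2$ so that the sum is strictly greater than $|G|$, matching Ore's strict requirement rather than a non-strict one. Since the hypothesis of \cref{cor:Ore} is stated with ``more than'' rather than ``at least,'' this causes no difficulty.
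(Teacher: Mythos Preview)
Your proposal is correct and follows exactly the approach the paper indicates: it uses Dirac's \cref{thm:Dirac} to obtain Hamiltonicity (and hence $2$-connectivity), and then verifies the degree-sum hypothesis of Ore's \cref{thm:Ore} directly from $\delta(G)>|G|/2$. This is precisely the route signposted in the text preceding the corollary, so there is nothing to add.
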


A graph~$G$ is said to be {\em bi-critical} 
if the subgraph $G-u-v$ has a perfect matching for every two distinct vertices~$u$ and~$v$. 
The minimum degree, as expectable, can also be used to determine the bi-criticality of graphs.

\begin{lem}[Plummer, \cite{Plu90}]\label{lem:Plummer}
Let $G$ be a connected graph of even order $n$.
If the minimum degree of $G$ is larger than $n/2$, 
then the graph $G$ is bi-critical.
\end{lem}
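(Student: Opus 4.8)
The plan is to prove bi-criticality by producing, for arbitrary distinct vertices $u$ and $v$, a perfect matching of $H := G-u-v$ directly from a Hamiltonian path. The starting observation is that $H$ has order $n-2$, which is even, so a Hamiltonian path of $H$ on these $n-2$ vertices immediately yields a perfect matching: writing such a path as $x_1x_2\cdots x_{n-2}$, the set of alternate edges $\{x_1x_2,\,x_3x_4,\,\ldots,\,x_{n-3}x_{n-2}\}$ covers every vertex exactly once. Thus it suffices to exhibit a Hamiltonian path of $H$, which I will obtain as the tail of a Hamiltonian path of the one-vertex-deleted graph $G-u$.

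The crucial step is to apply \cref{cor:Ore} to $G-u$ rather than to $H$ itself. Since $n$ is even, $n/2$ is an integer, so the hypothesis $\delta(G)>n/2$ forces $\delta(G)\ge n/2+1$, whence $\delta(G-u)\ge\delta(G)-1\ge n/2$. The graph $G-u$ has order $n-1$, and $n/2>(n-1)/2=|G-u|/2$, so $\delta(G-u)>|G-u|/2$ and \cref{cor:Ore} tells us that $G-u$ is Hamiltonian-connected. In particular, choosing $v$ and any other vertex $w\ne v$ of $G-u$ as endpoints, there is a Hamiltonian path $v\,x_2\,x_3\cdots x_{n-1}$ of $G-u$ having $v$ as an endpoint.

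Deleting the endpoint $v$ from this path leaves the path $x_2x_3\cdots x_{n-1}$, which visits all $n-2$ vertices of $H=G-u-v$ and hence is a Hamiltonian path of $H$. Reading off alternate edges, as in the first paragraph, produces a perfect matching of $H$. Since $u$ and $v$ were arbitrary distinct vertices, $G$ is bi-critical, completing the proof.

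The point requiring care—really the only obstacle—is the integrality margin in the degree bookkeeping: one may delete a single vertex and still keep the minimum degree strictly above half the (now odd) order, but deleting a second vertex at the outset would drop $\delta$ to merely more than $n/2-2<|H|/2$, where \cref{cor:Ore} no longer applies to $H$ directly. This is why the second vertex $v$ must be removed only \emph{after} the Hamiltonian path is produced, by taking it as a path endpoint, rather than before. As an alternative route avoiding Hamiltonicity, one could instead assume $H$ has no perfect matching, invoke \cref{thm:Berge} to get a set $S$ with $o(H-S)\ge|S|+2$ factor-critical components, bound each component below by noting that a vertex in a component $C$ has all its $G$-neighbours in $C\cup S\cup\{u,v\}$ (so $|C|\ge n/2-|S|$), and derive a contradiction by counting $n-2=|S|+\sum_i|C_i|$; the Hamiltonian-connectedness argument above is cleaner and uses exactly the corollary just established.
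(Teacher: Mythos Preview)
The paper does not prove this lemma; it is quoted as Plummer's result from~\cite{Plu90} without argument. Your proof is correct and entirely self-contained within the tools the paper has already assembled: you extract the integrality gain $\delta(G)\ge n/2+1$ from the even-order hypothesis, apply \cref{cor:Ore} to $G-u$ (where the odd order $n-1$ supplies the strict inequality $\delta(G-u)\ge n/2>(n-1)/2$), and then peel off $v$ as an endpoint of the resulting Hamiltonian path to get a Hamiltonian path of $G-u-v$ and hence a perfect matching. Your remark that $v$ must be removed \emph{as a path endpoint} rather than beforehand is exactly the point, and the alternative Tutte--Berge sketch you outline would also go through. In short, you have supplied a clean proof where the paper merely cites one.
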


Let us give an overview of notion and notations that we need in the sequel. 
For any vertex subset~$S$ of~$V$, 
we denote by $G[S]$ the subgraph of~$G$ induced by~$S$, and write $G-S=G[V(G)-S]$.
For a graph $G$ and an edge set $\tilde{E}$,
we denote by $G\cup \tilde{E}$ the graph with vertex set $V(G)\cup V(\tilde{E})$
and edge set $E(G)\cup \tilde{E}$.

For any vertex subsets $X$ and $Y$ of a graph $G$, we denote by $E_G(X,Y)$ the set of edges with one end in $X$ and the other end in $Y$. It is clear that $E_G(X,Y)=E_G(Y,X)$. Denote $e_G(X,Y)=|E_G(X,Y)|$. 
As usual, we use the notation 
\[
\partial_G X\=E_G(X,\,V(G)- X).
\]
The degree of a vertex~$v$ in a graph~$G$ is denoted by $\deg_G(v)$.
The minimum degree of vertices of a vertex set $X$ in a graph $G$ is denoted by $\delta_G(X)$.
As usual, we denote $\delta(G)=\delta_G(V(G))$.
When the symbol $X$ or $Y$ denotes a subgraph of $G$, 
we use the same notation $E_G(X,Y)$ to denote the edge set $E_G(V(X),\,V(Y))$,
and use the similar convention $\delta_G(X)=\delta_G(V(X))$.

\section{Main Result}

\Cref{lem} will be of considerable help in the proof of \cref{thm:main}.

\begin{lem}\label{lem}\label{lem:Hall}
Let $d,k,s$ be integers such that $d\ge (s+k)/2+1$ and $d\ge k+1$.
Let $G'=(S,U)$ be a bipartite graph with part orders $|S|=s$ and $|U|=s+1$. 
Suppose that the minimum degree~$\delta_{G'}(U)$ is at least~$d$,
and that every vertex in the part~$S$ has degree at most $(d+2)$,
with at most one vertex in~$S$ having degree $(d+2)$.
Then for any vertex subset $S'\subset S$ of order $k$ and for any vertex subset $U'\subset U$ of order $(k+1)$,
the graph $G'-S'-U'$ has a perfect matching.
\end{lem}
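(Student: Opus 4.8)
The plan is to apply Hall's \cref{thm:Hall} to the balanced bipartite graph $\hat G=G'-S'-U'$. Writing $\hat S=S-S'$ and $\hat U=U-U'$, both parts have the same order $m=s-k$, so a matching saturating $\hat S$ is automatically perfect, and by \cref{thm:Hall} it suffices to verify $|N_{\hat G}(W)|\gge|W|$ for every $W\sse\hat S$. Two elementary degree facts set the stage. First, since a vertex of $\hat U\ss U$ loses at most the $k$ vertices of $S'$ as neighbors, every $u\in\hat U$ satisfies $\deg_{\hat G}(u)\gge d-k$; writing $\delta=d-k$, the hypothesis $d\gge(s+k)/2+1$ rearranges to $m\lle 2\delta-2$. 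Second, I would extract a global constraint by double counting the edges of $G'$: since $\sum_{u\in U}\deg_{G'}(u)=\sum_{v\in S}\deg_{G'}(v)$ while $\delta_{G'}(U)\gge d$ with $|U|=s+1$, the total degree deficiency of $S$ below $d+1$ obeys $\sum_{v\in S}\bg{d+1-\deg_{G'}(v)}=s(d+1)-\sum_{u\in U}\deg_{G'}(u)\lle s-d$.

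The heart of the argument is a proof by contradiction. Suppose Hall's condition fails, so that some $W\sse\hat S$ has $T:=N_{\hat G}(W)$ with $|T|\lle|W|-1$. On the one hand, the set $\hat U-T$ is nonempty, and each of its vertices has all of its (at least $\delta$) neighbors inside $\hat S-W$; this forces $m-|W|\gge\delta$, that is, $|W|\lle m-\delta\lle\delta-2$. On the other hand, each $v\in W$ has $\deg_{\hat G}(v)\lle|T|$, and deleting $U'$ costs $v$ at most $k+1$ of its neighbors, so $\deg_{G'}(v)\lle|T|+k+1$ and hence $d+1-\deg_{G'}(v)\gge\delta-|T|\gge\delta-|W|+1$. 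Summing over $W$, and using that at most one vertex of $S$ has degree $d+2$ (so at most one deficiency is negative, of size exactly $1$, and it may lie outside $W$), I would obtain
\[
|W|\,\bg{\delta-|W|+1}\lle\sum_{v\in W}\bg{d+1-\deg_{G'}(v)}\lle (s-d)+1.
\]

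It remains to watch these estimates collide. The left-hand quantity $g(x)=x(\delta+1-x)$, with $x=|W|$, is concave and equals $\delta$ at both $x=1$ and $x=\delta$, so $g(x)\gge\delta$ throughout the admissible range $1\lle x\lle\delta-2$; meanwhile $d\gge(s+k)/2+1$ gives $(s-d)+1\lle d-k-1=\delta-1$. Hence $\delta\lle\delta-1$, the desired contradiction. The main obstacle I anticipate is bookkeeping rather than conceptual: one must track the lone degree-$(d+2)$ vertex carefully through the deficiency sum—it is precisely what the ``$+1$'' on the right absorbs, which is why the statement is phrased with that refinement—and confirm that the two bounds meet with no slack exactly when $d=(s+k)/2+1$. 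Finally, the degenerate range $\delta\lle2$ needs no separate treatment, since a failing set would force $1\lle|W|\lle m-\delta$ and thus $m\gge\delta+1$, which together with $m\lle2\delta-2$ already requires $\delta\gge3$.
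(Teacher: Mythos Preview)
Your proof is correct. Both your argument and the paper's proceed by Hall's theorem plus an edge-counting inequality, so the underlying strategy is the same; the executions are dual to one another. The paper applies Hall from the $U$ side: it selects $T\subseteq U-U'$ with $|N_H(T)|\le|T|-1$, sets $p=|N_H(T)|$, derives via handshaking the inequality
\[
d(s+1)\lle 1+(d+1)(p+k)+(s-p-k)(s-p),
\]
and then treats the right side as a quadratic in $p$, showing it is maximized at the endpoint $p=s-k-1$ to reach a contradiction. You instead work from the $\hat S$ side, recast the counting as a bound on the total deficiency $\sum_{v\in S}\bigl(d+1-\deg_{G'}(v)\bigr)\le s-d$, and finish via the concavity of $x(\delta+1-x)$ on $[1,\delta]$. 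The two bad sets correspond under complementation ($W\leftrightarrow\hat S-N_H(T)$), so the inequalities are equivalent; what your formulation buys is that the role of the single degree-$(d+2)$ vertex is isolated as a clean ``$+1$'' correction to the deficiency sum, whereas in the paper's version it appears as the ``$1+$'' in the handshaking estimate and is less visibly tied to that hypothesis.
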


\begin{proof}
By contradiction, suppose that there exist subsets $S'\subset S$ and $U'\subset U$ 
such that the subgraph $H=G'-S'-U'$ has no perfect matchings. 
By Hall's \cref{thm:Hall}, there exists a vertex set $T \subseteq U-U'$ such that 
\begin{equation}\label[ineq]{NT<T}
|N_{H}(T)|\lle |T|-1.
\end{equation} 
See \cref{fig:lem:Hall}. Denote $p=|N_{H}(T)|$.
By using the hand-shaking theorem, we have
\begin{equation}\label{handshaking}
\sum_{u\in U}\deg_{G'}(u)
\=\sum_{v\in S}\deg_{G'}(v)
\=\sum_{\mathrlap{v\in N_{H}(T)\cup S'}}\deg_{G'}(v)
\;+\;\sum_{\mathrlap{v\in S-N_{H}(T)-S'}}\deg_{G'}(v).
%\=\sum_{x\in N_{H}(T)\cup S'}\deg_G x\;+\;\sum_{x\in S- S'-N_{H}(T)}\deg_G x. 
\end{equation}
We shall estimate the three summations on both sides of \cref{handshaking} individually.

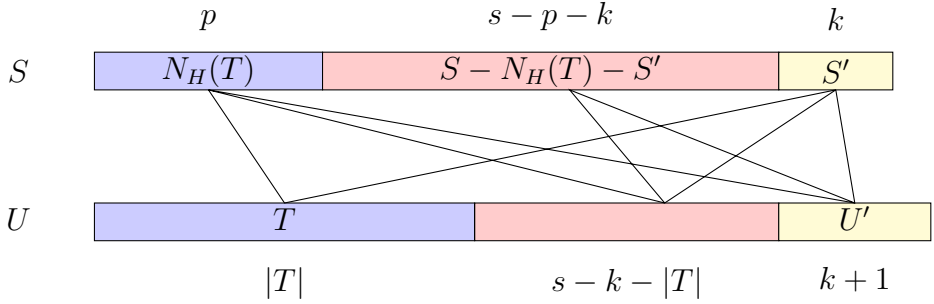
\begin{figure}[htbp]
\centering
\begin{tikzpicture}
\coordinate (U) at (-1, 0);
\coordinate (S) at (-1, 2.25);

\coordinate (T) at (2.5, 0);
\coordinate (U') at (10, 0);
\coordinate (N) at (1.5, 2.25);
\coordinate (S') at (9.75, 2.25);
\coordinate (S'') at (6, 2.25);
\coordinate (U'') at (7, 0);

\coordinate (U'p) at (10, .5);
\coordinate (Np) at (1.5, 2);
\coordinate (S''p) at (6.25, 2);
\coordinate (S'p) at (9.75, 2);
\coordinate (Tp) at (2.5, .5);
\coordinate (U''p) at (7.5, .5);

\draw[fill=blue!20] 
(0, 0) rectangle (5, .5)
(0, 2) rectangle (3, 2.5);
\draw[fill=red!20] 
(5, 0) rectangle (9, .5)
(3, 2) rectangle (9, 2.5);
\draw[fill=yellow!20] 
(9, 0) rectangle (11, .5)
(9, 2) rectangle (10.5, 2.5);
\draw
(5, 0) -- (5, .5)
(Np) -- (Tp) -- (S'p)
(U'p) -- (S''p) -- (U''p)
(Np) -- (U''p) -- (S'p)
(Np) -- (U'p) -- (S'p);
\draw
(S)node[]{$S$}
(U)node[above]{$U$}
(T)node[above]{$T$}
(U')node[above]{$U'$}
(N)node[]{$N_H(T)$}
(S')node[]{$S'$}
(S'')node[]{$S-N_H(T)-S'$}
(N)node[above=4mm]{$p$}
(S'')node[above=4mm]{$s-p-k$}
(S')node[above=4mm]{$k$}
(T)node[below=2mm]{$|T|$}
(U')node[below=2mm]{$k+1$}
(U'')node[below=2mm]{$s-k-|T|$}
;
\end{tikzpicture}
\caption{The graph $G'$.}\label{fig:lem:Hall}
\end{figure}

From the premise that every vertex in the part~$U$ has degree at least $d$, 
we infer that
\[
\sum_{u\in U}\deg_{G'}(u)\gge d\cdot|U|\= d\,(s+1).
\]
From the premise that every vertex in the part~$S$ has degree at most $(d+2)$, 
with at most one vertex having degree $(d+2)$, 
we deduce that
\[
\sum_{\mathrlap{v\in N_{H}(T)\cup S'}}\deg_{G'}(v)
\lle (d+2)+(d+1)\cdot (|N_{H}(T)\cup S'|-1)
\=1+(d+1)(p+k).
\]
Note that the neighbors of all vertices in the set $S-N_{H}(T)-S'$ are in the set $U-T$. 
Therefore, with the aid of \cref{NT<T},
we derive that 
\begin{align*}
\sum_{\mathllap{v\in S-N_{H}(T)-S'}}\deg_{G'}(v)
&\lle |S- N_{H}(T)- S'|\cdot|U-T|\\[-5pt]
&\= (s-p-k)(s+1-|T|)
\lle (s-p-k)(s-p). 
\end{align*}
%\begin{align}
%\sum_{\mathllap{x\in S-N_{H}(T)-S'}}\deg_G x
%&\lle |S- N_{H}(T)- S'|\cdot|U-T|\notag\\[-5pt]
%&\= (s-p-k)(s+1-|T|)
%\lle (s-p-k)(s-p). \label[ineq]{deg:S''}
%\end{align}
Combining the above three inequalities 
%\cref{deg:U,deg:NT,deg:S''} 
with \cref{handshaking}, we obtain that
\begin{equation}\label[ineq]{ineq:handshaking}
d\,(s+1)\lle 1+(d+1)(p+k)+(s-p-k)(s-p).
\end{equation}

To deal with \cref{ineq:handshaking}, we first figure out the domain of~$p$.
On the one hand, we have $T\ne\emptyset$ in virtue of \cref{NT<T}.
From the premise, every vertex in the set~$T$ has at least $d$ neighbors.
Thus $|N_{G'}(T)|\ge d$ and thereby
\[
|N_{H}(T)|\gge |N_{G'}(T)|-|S'|\gge d-k.
\]
On the other hand, from definition, we have $T\subseteq U-U'$. 
Together with \cref{NT<T}, we obtain 
\[
p\lle |T|-1
\lle |U-U'|-1\=(s+1)-(k+1)-1\=s-k-1.
\]
Combining the above two inequalities, we find the domain 
\[
d-k\lle p\lle s-k-1.
\]

In view of the premises $d\ge (s+k)/2+1$ and $d\ge k+1$, and the above domain of~$p$,
it is elementary to derive that the right hand side of \cref{ineq:handshaking},
considered as a quadratic function in the variable~$p$, attains its maximum at the value $p=s-k-1$.
Therefore, we can substitute $p=s-k-1$ into \cref{ineq:handshaking}, which gives
\[
d\,(s+1)\lle 1+(d+1)(s-1)+(k+1),
\]
contradicting the premise $d\ge (s+k)/2+1$.
This completes the proof.  
\end{proof}

\begin{lem}\label{lem:CC:s=0}
Let $H$ be a graph with minimum degree at least $\cl{n/4}$,
consisting of factor-critical components~$C_1$ and~$C_2$ with $|C_1|\le |C_2|$.
Let $M$ be a perfect matching of the complementary graph of~$H$.
Let $M'$ be a perfect matching of the graph $H\cup M$ such that 
the graph $(H\cup M)-M'$ consists of factor-critical components~$C_1'$ and~$C_2'$ with $|C_1'|\le |C_2'|$.
Suppose that 
\begin{equation}\label[ineq]{cond:lem}
E_M(C_1,C_2)-M'\ne\emptyset.
\end{equation}
Then we have $V(C_1')\subset V(C_2)$. In other words, we have 
\[
V(C_1)\cap V(C_1')\=\emptyset
\rmand
V(C_2)\cap V(C_2')\;\ne\;\emptyset.
\]
\end{lem}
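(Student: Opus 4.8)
The plan is to compare the two near-bisections of $V(H)$ induced by the old components $\{C_1,C_2\}$ and the new components $\{C_1',C_2'\}$. Writing $A_i=V(C_1)\cap V(C_i')$ and $B_i=V(C_2)\cap V(C_i')$ for $i=1,2$, I would first record the elementary facts that all four orders $|C_1|,|C_2|,|C_1'|,|C_2'|$ are odd (each component is factor-critical), that $|C_1|\le|C_2|$ and $|C_1'|\le|C_2'|$ with each pair summing to~$n$, and that $\delta(H)\ge\cl{n/4}$. The crucial structural observation is that, since $C_1'$ and $C_2'$ are distinct components of $(H\cup M)-M'$ while $H\subseteq H\cup M$ carries no edge between $V(C_1)$ and $V(C_2)$, every edge of~$H$ joining $V(C_1')$ to $V(C_2')$ must have been deleted, i.e. it lies in the matching~$M'$.

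Next I would convert this observation into size bounds on the four parts. Because $M'$ is a matching, each vertex meets at most one edge of~$M'$; hence a vertex $v\in A_1$ has at most one $H$-neighbour in $A_2$, since its $H$-neighbours all lie in $V(C_1)=A_1\cup A_2$ and those in $A_2\subseteq V(C_2')$ are cut off by~$M'$. Together with $\deg_H(v)\ge\cl{n/4}$ this forces $|A_1|\ge\cl{n/4}$ whenever $A_1\ne\emptyset$, and symmetrically $|A_2|,|B_1|,|B_2|\ge\cl{n/4}$ whenever the corresponding part is nonempty. Invoking the hypothesis \cref{cond:lem}, I would pick a cross edge $uv$ with $u\in C_1$, $v\in C_2$ that lies in $M$ but not in $M'$; since it survives in $(H\cup M)-M'$, its endpoints lie in a common new component, so either $\{u,v\}\subseteq V(C_1')$ or $\{u,v\}\subseteq V(C_2')$.

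The conclusion then follows from a parity contradiction resting on the inequality $2\cl{n/4}\ge\nt$. If $\{u,v\}\subseteq V(C_1')$, then $u\in A_1$ and $v\in B_1$ are both nonempty, whence $|C_1'|=|A_1|+|B_1|\ge2\cl{n/4}\ge\nt\ge|C_1'|$; this forces $2\cl{n/4}=\nt$ and $|C_1'|=\nt$, so $n\equiv0\pmod4$ and $\nt$ is even, contradicting the odd order of the factor-critical component $C_1'$. Hence $\{u,v\}\subseteq V(C_2')$, giving $u\in A_2\ne\emptyset$ and $v\in B_2\ne\emptyset$; and if moreover $A_1\ne\emptyset$, the identical computation applied to $|C_1|=|A_1|+|A_2|\ge2\cl{n/4}\ge\nt\ge|C_1|$ would again make the odd order $|C_1|$ equal to the even number $\nt$. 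Therefore $A_1=\emptyset$, that is $V(C_1')=B_1\subseteq V(C_2)$, while $B_2\ne\emptyset$ yields $V(C_2)\cap V(C_2')\ne\emptyset$, as claimed. I expect the main obstacle to be extracting the sharp per-vertex bound from the matching~$M'$ and then noticing that combining $2\cl{n/4}\ge\nt$ with the odd order of the factor-critical components closes both cases simultaneously.
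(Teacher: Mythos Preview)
Your proof is correct and follows essentially the same approach as the paper: partition the vertex set into the four intersections $V(C_i)\cap V(C_j')$, observe that every $H$-edge crossing the $\{C_1',C_2'\}$ partition must lie in the matching $M'$, deduce the $\lceil n/4\rceil$ lower bound on each nonempty part, and close with the parity of factor-critical components. The only difference is organizational: the paper first assumes $V(C_1)\cap V(C_1')\ne\emptyset$, argues that the other two cross-parts must vanish, and only then invokes the surviving edge of $E_M(C_1,C_2)-M'$ to reach a contradiction, whereas you invoke that edge at the outset to pin down which parts are nonempty---but the underlying ideas are identical.
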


\begin{proof}
Denote $H'=(H\cup M)-M'$. 
Since the minimum degree $\delta(H)\ge\cl{n/4}$, we find
\begin{equation}\label[ineq]{lb:c1:lem}
|C_1|\gge \nf+1.
\end{equation}
For $i,j\in[2]$, we denote
\[
V_{ij}\=V(C_i)\cap V(C_j').
\]
Then the desired results are $V_{11}=\emptyset$ and $V_{22}\neq\emptyset$. 
See \cref{fig:CC}.
In the colorful version, one may see that the component $C_1'$ is in red,
while the component $C_2'$ is in blue.

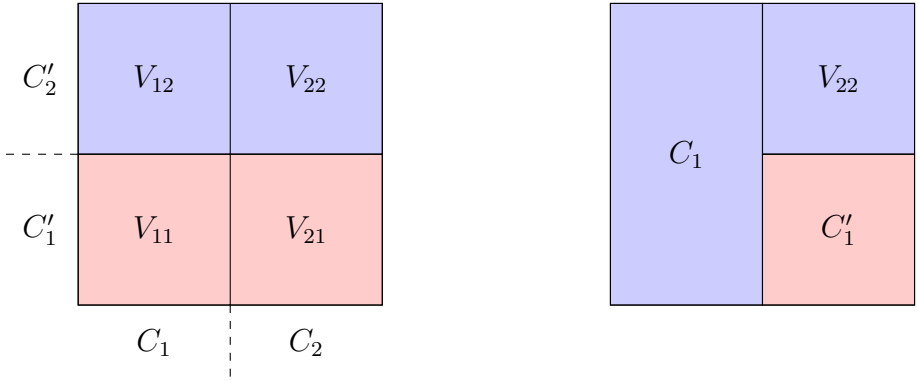
\begin{figure}[htbp]
\centering
\begin{tikzpicture}
\begin{scope}
\draw[fill=blue!20] 
(0,2) rectangle (4,4);
\draw[fill=red!20] 
(0,0) rectangle (4,2);

\coordinate (C1) at (1, -.5);
\coordinate (C2) at (3, -.5);
\coordinate (C1') at (-.5, 1);
\coordinate (C2') at (-.5, 3);
\coordinate (V11) at (1, 1);
\coordinate (V12) at (1, 3);
\coordinate (V21) at (3, 1);
\coordinate (V22) at (3, 3);
\draw
(0, 0) rectangle (4, 4)
(2, 0) -- (2, 4)
(0, 2) -- (4, 2)
;
\draw[dashed]
(0, 2) -- +(-1, 0)
(2, 0) -- +(0, -1)
;
\draw
(C1)node[]{$C_1$}
(C2)node[]{$C_2$}
(C1')node[]{$C_1'$}
(C2')node[]{$C_2'$}
(V11)node[]{$V_{11}$}
(V12)node[]{$V_{12}$}
(V21)node[]{$V_{21}$}
(V22)node[]{$V_{22}$}
;
\end{scope}
%:
\begin{scope}[xshift =7cm]
\draw[fill=blue!20] 
(0,0) rectangle (2,4)
(2,2) rectangle (4,4);
\draw[fill=red!20] 
(2,0) rectangle (4,2);

\coordinate (C1) at (1,2);
\coordinate (C2) at (3, -.5);
\coordinate (C1') at (3, 1);
\coordinate (C2') at (-.5, 3);
\coordinate (V22) at (3, 3);
\draw
(0, 0) rectangle (4, 4)
(2, 0) -- (2, 4)
(2, 2) -- (4, 2)
;
\draw
(C1)node[]{$C_1$}
(C1')node[]{$C_1'$}
(V22)node[]{$V_{22}$}
;
\end{scope}
\end{tikzpicture}
\caption{The decomposition of components of the graph $H$.}\label{fig:CC}
\end{figure}

The vertex set $V(C_i)$ which is connected in the graph $H$,
is decomposed into the subsets $V_{i1}$ and~$V_{i2}$ in the graph $H'$,
one of which might be empty.
Therefore, we infer that 
\begin{align}
\label[rl]{rl:M'}
E_{C_i}(V_{i1},\,V_{i2})&\;\subseteq\; E(H)-E(H')\;\subseteq\;M'.
%,\rmand\\
%\label[rl]{rl:M}
%E_{H'}(V_{1j},\,V_{2j})&\;\subseteq\; E(H')-E(H)\;\subseteq\;M.
\end{align}
Let $i,j\in[2]$. From \cref{rl:M'}, we deduce that in the component $C_i$,
every vertex (if it exists) in the set $V_{ij}$ has at most one neighbor in the set $V_{ij'}$, where $j'\neq j$.
Therefore, we have
\begin{equation}\label[ineq]{delta:Vij}
\delta_H(V_{ij})\gge\delta(H)-1\gge \Bcl{\nf}-1,\qquad\text{if\; $V_{ij}\ne\emptyset$.}
\end{equation}
It follows that 
\begin{equation}\label[ineq]{Vij>=n/4}
|V_{ij}|\gge \Bcl{\nf},\qquad\text{if\; $V_{ij}\ne\emptyset$.}
\end{equation}

\smallskip
By way of contradiction,
assume that $V_{11}\neq\emptyset$. First, we claim that 
\[
V(C_1')\=V(C_1)
\rmand
V(C_2')\=V(C_2),
\]
that is, $V_{12}=V_{21}=\emptyset$.
In fact, if $V_{12}\ne\emptyset$, then \cref{Vij>=n/4} implies that
\[
|C_1|\=|V_{11}|+|V_{12}|\gge \nf+\nf\=\nt.
\]
Since $|C_1|\le |C_2|=n-|C_1|\le n/2$, we infer that $|C_1|=n/2$, i.e., 
the equality in the above inequality holds. 
In particular, the odd component~$C_1$ 
is composed of two vertex sets~$V_{11}$ and~$V_{12}$ of the same order, which is absurd!
This proves $V_{12}=\emptyset$, i.e., $V(C_1)=V_{11}$.
Now, if $V_{21}\ne\emptyset$, then 
\cref{Vij>=n/4,lb:c1:lem} imply that
\[
|C_1'|\=|V_{11}\cup V_{21}|\=|C_1|+|V_{21}|\gge \Bg{\nf+1}+\nf\>\nt.
\]
It follows that $|C_2'|<|C_1'|$, contradicting the premise $|C_1'|\le |C_2'|$. 
This proves the claim.

From \cref{cond:lem}, there exists an edge 
\[
e'\;\in\; E_M(C_1,C_2)-M'\;\subseteq\; E(H').
\]
From the claim, we see that 
\[
e'\;\in\; E_M(C_1,C_2)\=E_M(C_1',C_2').
\]
Combining the above two relations, we obtain
\begin{equation}\label{lem:contradiction}
e'\;\in\; E(H')\cap E_M(C_1',C_2')\sse E_{H'}(C_1',C_2').
\end{equation}
This is impossible since the components $C_1'$ and $C_2'$ are disconnected in the graph~$H'$.
This proves $V_{11}=\emptyset$. 
\smallskip

It remains to show that $V_{22}\neq\emptyset$.
In fact, the opposite relation $V_{22}=\emptyset$ implies that 
\[
V(C_2')\=V(C_1)
\rmand
V(C_1')\=V(C_2),
\] 
which results the same contradiction \eqref{lem:contradiction}.
This proves \cref{lem:CC:s=0}.
\end{proof}

Here is our main result.

\begin{thm}\label{thm:main}
Let $n\ge 34$. Then every $\{D_n,\,D_n+1\}$-graph of order $n$ has at least $\cl{n/4}$ 
disjoint perfect matchings. 
\end{thm}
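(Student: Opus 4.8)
The plan is to argue by contradiction, peeling off perfect matchings until we get stuck and then repairing the obstruction by an edge-swap. Write $m=\cl{n/4}$, so that $D_n=2m-1$ and the target count is exactly $m$. Suppose $G$ admits only $t\le m-1$ disjoint perfect matchings and fix a family $M_1,\dots,M_t$ realizing this maximum. Since each perfect matching lowers every degree by exactly one, the leftover graph $H=G-(M_1\cup\cdots\cup M_t)$ is again semi-regular: it is a $\{d,\,d+1\}$-graph with $d=D_n-t\ge m$, and by maximality it has no perfect matching. I would first record a reduction: it suffices to produce \emph{two} edge-disjoint perfect matchings inside $H\cup M$, where $M=M_t$ is one of the removed matchings (hence a perfect matching of the complement of $H$). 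Indeed, two such matchings $M',M''$ together with $M_1,\dots,M_{t-1}$ would give $t+1$ disjoint perfect matchings of $G$, contradicting maximality.

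To locate the obstruction I would apply the strong form of Tutte's theorem (\cref{thm:Berge}) to $H$: there is a barrier $S$ with $o(H-S)\ge|S|+2$ and every component of $H-S$ factor-critical, hence of odd order. The decisive leverage is that $H$ is a $\{d,d+1\}$-graph with $d\ge m\ge n/4$: every vertex of a component $C_i$ has all of its $H$-neighbours inside $C_i\cup S$, whence $|C_i|\ge d-|S|+1$, while near-regularity bounds the number of edges leaving $S$. Inserting these into the vertex count $n=|S|+\sum_i|C_i|$ together with $o(H-S)\ge|S|+2$ forces $|S|$ and the number of components into a short list of configurations; the principal one is $|S|=0$ with exactly two large factor-critical components, which is precisely the hypothesis of \cref{lem:CC:s=0}.

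In each configuration I would reinstate $M$ and try to decompose $H\cup M$ into two perfect matchings. Suppose this fails: then for every perfect matching $M'$ of $H\cup M$ the graph $(H\cup M)-M'$ again has none, so Tutte's theorem returns fresh factor-critical components $C_1',C_2',\dots$. Here \cref{lem:CC:s=0} is the engine: as long as some $M$-edge across the old components survives $M'$, the smaller new component $C_1'$ is swallowed by the larger old component $C_2$. Choosing $M$ and $M'$ so as to minimize the order of the smaller component and iterating, this monotone shrinking cannot persist, so eventually the leftover has no odd component and the second perfect matching appears, the desired contradiction. \Cref{lem:Hall} supplies the book-keeping in the barrier region: after deleting the few vertices used to stitch components together across $S$, it certifies that the bipartite graph between $S$ and a component still has a matching covering the component, so the rerouted edges genuinely assemble into perfect matchings. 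The bi-criticality (\cref{lem:Plummer}) and Hamiltonian-connectedness (\cref{cor:Ore}) of the large components then let me route the connecting $M$-edges through them while keeping both halves matchable.

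The main obstacle is the case analysis sitting on top of the barrier count, and in particular arranging the swap to improve strictly without cycling. The delicate point is the two- and three-component cases, where merging odd components into an even whole must avoid creating any new odd component while making the smaller component genuinely decrease; \cref{lem:CC:s=0} is tailored to certify exactly this monotonicity. The tightest estimates occur at the boundary, which is where the hypothesis $n\ge34$ and the parity split $n\equiv0,2\pmod4$ encoded in $D_n$ are consumed.
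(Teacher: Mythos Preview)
Your outline tracks the paper's strategy closely: contradiction via a maximum family $\mathcal{M}$, the Tutte--Berge barrier $S$ in the leftover $\{d,d+1\}$-graph $H$, a case split on $s=|S|$, and the reduction to producing two disjoint perfect matchings in $H\cup M_0$ for a well-chosen $M_0\in\mathcal{M}$. The tools you cite (\cref{lem:Hall} for the bipartite patching when $s\ge2$, \cref{cor:Ore} and \cref{lem:Plummer} for routing inside the large components) are also the ones the paper uses. The cases $s\ge2$ and $s=1$ do go roughly as you indicate, though each requires a nontrivial pigeonhole over $\mathcal{M}$ to select an $M_0$ with enough cross-edges (for instance $|\partial_{M_0}C_q|\ge3$ when $s\ge2$, or $e_{M_0}(C_1,C_2)\ge3$ when $s=0$); ``the last removed matching'' will not do.

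The genuine gap is your treatment of $s=0$. \Cref{lem:CC:s=0} asserts $V(C_1')\subset V(C_2)$: the new smaller component lies inside the old \emph{larger} one. This does not imply $|C_1'|<|C_1|$, so there is no monotone shrinking of the smaller component's order, and a minimize-and-iterate argument does not close; indeed both $|C_1|$ and $|C_1'|$ are pinned at or above $\lceil n/4\rceil+1$ by the degree bound, so no descent is available. The paper's mechanism is different and more delicate. After one application of \cref{lem:CC:s=0} giving $V(C_1')\subset V(C_2)$, it explicitly \emph{constructs} a second perfect matching $M''$ of $F=H\cup M_0$ satisfying two survival conditions simultaneously, namely $E_{M_0}(C_1,C_2)\setminus M''\ne\emptyset$ and $E_{M_0'}(C_1',C_2')\setminus M''\ne\emptyset$. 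Applying \cref{lem:CC:s=0} once from the pair $(H,M_0,M'')$ and once from $(H',M_0',M'')$ then traps $V(C_1'')$ inside $V_{22}=V(C_2)\cap V(C_2')$. The contradiction is purely numerical: one has $|V_{22}|\le n/2-2$ a priori, yet $V_{22}$ decomposes as $V(C_1'')\sqcup W$ with $|C_1''|\ge\lceil n/4\rceil+1$ and $|W|\ge\lceil n/4\rceil-1$ (from a degree count in $W$), forcing $|V_{22}|\ge n/2$. Building an $M''$ that meets both survival conditions is itself a case analysis using factor-criticality of $C_1$ and $C_1'$, bi-criticality of $H[V_{22}]$, and a Hamiltonian cycle there; your sketch does not supply this step, and without it the $s=0$ case remains open.
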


\begin{proof}
Let $n\ge 34$. For short, we denote $D=D_n$ throughout this proof.
Let $G$ be an $\{D,D+1\}$-graph with a maximum family $\M$ of perfect matchings.
Let $l=|\M|$. At the beginning, we suppose that $n\ge 2$.

By way of contradiction, we assume $l\le \cl{n/4}-1$. It follows that 
\begin{equation}\label[ineq]{lb:D-l}
D-l\gge \Bcl{\nf}.
\end{equation}
Since $n\ge 34$, by \cref{lb:D-l}, we have
\begin{equation}\label[ineq]{D-l>=9}
D-l\gge 9.
\end{equation}
Let $H=G-\M$ denote the graph obtained by removing all edges constituting the matchings in the family~$\M$. 
Then the graph~$H$ is $\{D-l,\,D-l+1\}$-regular. Thus for any vertex $v$, we have
\begin{equation}\label[ineq]{range:deg}
D-l\lle \deg_{H}(v)\lle D-l+1.
\end{equation}

By the choice of the family~$\M$, the graph~$H$ has no perfect matchings.
By \cref{thm:Berge}, there is a vertex subset~$S$ 
such that the graph $H-S$ consists of factor-critical components
$C_1,C_2,\ldots,C_q$ with
\begin{align}
\label[ineq]{q>=s+2}
&q\gge s+2,\\
\label{mod:qs}
&q\equiv s\pmod{2},\\
\label{mod:ci}
&c_i\equiv 1\pmod{2},\rmand\\
\label[ineq]{order:c}
&1\lle c_1\lle c_2\lle\cdots\lle c_q,
\end{align}
where $s=|S|$ and $c_i=|C_i|$. 
By using \cref{range:deg}, we infer that
\begin{equation}\label[ineq]{ub:PS}
\sum_{i=1}^q |\partial_{H} C_i|
\=|\partial_{H} S|
\lle (D-l+1)\cdot s.
\end{equation}
On the other hand, by counting the vertices in~$H$, we find 
\begin{equation}\label{count:vertex}
n\=s+\sum_{i=1}^q c_i,
\end{equation}
Together with \cref{order:c,q>=s+2}, we infer that $n\ge s+q\ge 2s+2$, that is,
\begin{equation}\label[ineq]{s<=n/2-1}
s\lle \frac{n}{2}-1.
\end{equation}

Let $i\in[q]$.
Since every vertex in the component $C_i$ has at most $(c_i-1)$ neighbors inside itself,
it has at least $(D-(c_i-1))$ neighbors outside. Thus we have
\begin{equation}\label[ineq]{lb:partialCi}
|\partial_G C_i|\gge c_i\cdot (D-c_i+1).
\end{equation}
Along the same line, we can deduce 
\[
|\partial_{H} C_i|\gge c_i\cdot (D-l+1-c_i).
\]
Regarding the right hand side of the above inequality as a quadratic function in the variable~$c_i$, 
we obtain 
\begin{align}
|\partial_{H} C_i|\gge D-l,&\qquad\text{if $1\le c_i\le D-l$;}\label[ineq]{ineq:pc1}\\[3pt]
|\partial_{H} C_i|\gge 2(D-l-1),&\qquad\text{if $3\le c_i\le D-l-1$;}\rmand\label[ineq]{ineq:pc2}\\[3pt]
|\partial_{H} C_i|\gge 3(D-l-2),&\qquad\text{if $3\le c_i\le D-l-2$.}\label[ineq]{ineq:pc3}
\end{align}
In this proof, we often make effort to find the range of some order $c_i$
so as to use the corresponding lower bound of the number $|\partial_{H} C_i|$ given by one of \cref{ineq:pc1,ineq:pc2,ineq:pc3}.

Assume that $c_q\le D-l$, then \cref{order:c} implies that $1\le c_i\le D-l$ for all $i\in[q]$. 
Thus, \cref{ub:PS,ineq:pc1,q>=s+2} imply that 
\[
(D-l)\cdot (s+2)
\lle (D-l)\cdot q
\lle \sum_{i=1}^q |\partial_{H} C_i|
\lle (D-l+1)\cdot s.
\]
Simplifying it, and by using \cref{lb:D-l}, we find
$s\ge 2(D-l)\ge n/2$, contradicting \cref{s<=n/2-1}. 
Therefore, we have $c_q\ge D-l+1$. By using \cref{lb:D-l} again, we can deduce
\begin{equation}\label[ineq]{lb:cq}
c_q\gge D-l+1\gge \nf+1.
\end{equation}
Together with \cref{count:vertex,q>=s+2}, we infer that
\[
n\=s+\sum_{i=1}^{q-1}c_i+c_q
\gge s+(q-1)+\Bg{\nf+1}
\gge 2s+\nf+2,
\]
that is,
\begin{equation}\label[ineq]{ub:s}
s\lle \frac{3n}{8}-1.
\end{equation}

\bigskip

%For convenience, we write $E(X,Y)=E_G(X,Y)$
%for any vertex sets or subgraphs $X$ and $Y$.
%Similarly, we omit the subscript $G$ in the notations $e_G(X,Y)$ and $\partial_G X$.

Below we will handle the cases $s=1$, $s\ge 2$, and $s=0$, individually.
As will be seen, the case $s=1$ is relatively easy, 
the case $s=2$ implies that $s\ge \cl{n/4}$, 
and the case $s=0$ is proved to be reducible to the previous cases.

\begin{case}\label{case:s>1} $s\geq 2$.

First, we show that $s\ge \cl{n/4}$ in this case, and figure out some basic relation among the parameters.

\begin{clm}\label{clm1:s>1}
Suppose that $s\gge 2$.
Then we have 
\begin{itemize}
\vskip2pt\item[(i)]
$s\ge D-l\ge \cl{n/4}$;
\vskip2pt\item[(ii)]
$q=s+2$;
\vskip2pt\item[(iii)]
$c_i=1$ for $i\in[q-1]$;
\vskip2pt\item[(iv)]
$c_q=n-2s-1\in
%[D-l+1,\, n-2D+2l-1]\subseteq
[\,n/4+1,\,n/2-1\,]$.
\vskip2pt\item[(v)]
$|\partial_{H} C_q| \le s+l-D$, and the subgraph $C_q$ is Hamiltonian-connected. 
%\le n/8-1$.
\end{itemize}
\end{clm}

We shall show the above results one by one.

\medskip
\noindent{\bf (i).}
In order to show the desired lower bound $D-l$ of the number $s$, 
we suppose, to the contrary, that $s<D-l$. 
If the component $C_1$ consists of a single vertex, 
then all neighbors of this vertex lie in the set $S$.
As a consequence, by \cref{range:deg},
the set $S$ contains at least $D-l$ vertices, a contradiction.
Note that all the components~$C_i$ are of odd order.
Therefore, we have 
\begin{equation}\label[ineq]{c1>=3}
c_1\gge 3.
\end{equation}
It will be used to judge the condition when we apply \cref{ineq:pc2,ineq:pc3}.

From \cref{q>=s+2}, we see that $q\ge 4$.
Thus the notation $C_{q-3}$ is well defined.
Assume that $C_{q-3}\ge D-l$. By \cref{count:vertex,order:c,lb:cq}, we have 
\[
n\gge c_{q-3}+c_{q-2}+c_{q-1}+c_q\gge 3(D-l)+(D-l+1),
\]
contradicting \cref{lb:D-l}. Thus, we have $C_{q-3}\le D-l-1$. Together with \cref{c1>=3}, we find
\begin{equation}\label[ineq]{range:c:q-3}
3\lle c_i\lle D-l-1,\qquad \text{for all } i\in[q-3].
\end{equation}
Therefore, by using \cref{ineq:pc2}, we can deduce from \cref{ub:PS} that 
\begin{equation}\label[ineq]{ineq:2}
(D-l+1)s
\gge \sum_{i=1}^{q} |\partial_{H} C_i|
\gge \sum_{i=1}^{q-3} |\partial_{H} C_i|
\gge 2(D-l-1)(q-3).
\end{equation}

Assume that $q\ge s+3$. Then \cref{ineq:2} implies $D-l+1\gge 2(D-l-1)$, contradicting \cref{D-l>=9}.
This proves that $q\le s+2$.
In view of \cref{q>=s+2}, we derive that $q=s+2$.
Consequently, \cref{ineq:2} implies that
\[
s\lle 2\biggl(1+{2\over D-l-3}\biggr)\lle \frac{8}{3}.
\]
Therefore, we find $s=2$ and $q=4$.
%By \cref{order:c,lb:D-l}, we deduce that 
%\begin{equation}\label[ineq]{c1<=}
%c_1\lle \Bfl{\frac{n-2}{4}} \= \Bcl{\nf}-1\lle D-l-1.
%\end{equation}

Assume that $c_1\le D-l-2$.
By using \cref{ineq:pc2,ineq:pc3}, we can deduce from \cref{ub:PS} that 
\[
2(D-l+1)
\gge |\partial_{H} C_1|
\gge 3(D-l-2),
\]
contradicting \cref{D-l>=9}. From \cref{range:c:q-3}, we deduce that 
\[
c_1\=D-l-1\gge \Bcl{\nf}-1.
\]
In view of \cref{count:vertex} that $n-2=\sum_{i=1}^4c_i$, we find
\[
c_1\=c_2\=c_3\=c_4\=\frac{n-2}{4},
\]
contradicting \cref{lb:cq}.
This completes the proof of the lower bound part $s\ge D-l$ in \cref{clm1:s>1}~(i).
By \cref{lb:D-l} again, we obtain $s\ge \cl{n/4}$ immediately.

\medskip
\noindent{\bf (ii).}
Note that \cref{count:vertex,q>=s+2,order:c} give that
\begin{equation}\label[ineq]{ineq:5}
n\=s+\sum_{i=1}^{q-2}c_i+(c_{q-1}+c_q)
\gge s+(q-2)+2c_{q-1}
\gge 2(s+c_{q-1}).
\end{equation}
Together with the inequality $s\ge D-l$ confirmed in \cref{clm1:s>1}~(i), and \cref{lb:D-l}, we find that 
\[
c_{q-1}\lle \nt-D+l\lle D-l.
\]
Therefore, \cref{ub:PS,ineq:pc1} give
\[
(D-l+1)s
\gge\sum_{i=1}^{q} |\partial_{H} C_i|
\gge\sum_{i=1}^{q-1} |\partial_{H} C_i|
\gge (D-l)(q-1),
\]
which can be recast as $(D-l)(q-s-1)\le s$.
By using \cref{s<=n/2-1}, we infer that 
\[
q-s-1\lle {s\over D-l}\lle {n/2-1\over n/4}\<2.
\]
It follows that $q\le s+2$.
In view of \cref{q>=s+2}, we derive that $q=s+2$.

\medskip
\noindent{\bf (iii).} 
Suppose to the contrary that $c_{q-1}\ge 3$.

If $c_{q-1}\le D-l-1$, then \cref{ub:PS,ineq:pc1,ineq:pc2} yield that
\[
(D-l+1)s\gge\sum_{i=1}^{q-2} |\partial_{H} C_i|+|\partial_{H} C_{q-1}|
\gge (D-l)s+2(D-l-1),
\] 
that is, $s\ge 2(D-l-1)\ge n/2-2$.
Therefore, \cref{ineq:5} implies
$n\ge 2(s+3)\ge n+2$,
a contradiction. Therefore, we have 
$c_{q-1}\ge D-l$.
Together with \cref{clm1:s>1}~(i) that $s\geq D-l$, we see that all the equalities in \cref{ineq:5} hold true.
In particular, one has $c_q=n/4$, contradicting \cref{lb:cq}.
This confirms \cref{clm1:s>1}~(iii).

\medskip
\noindent{\bf (iv).}
Now, by \cref{clm1:s>1}~(ii) and (iii), \cref{count:vertex} reduces to
\[
n\=s+(q-1)+c_q\=2s+1+c_q,
\]
which gives the desired formula for~$c_q$. 
By using \cref{lb:D-l} and using $s\ge D-l$ from \cref{clm1:s>1}~(i), 
we find the desired upper bound $n/2-1$ of~$c_q$. 
The lower bound has been shown in \cref{lb:cq}.
This proves \cref{clm1:s>1}~(iv).

\medskip
\noindent{\bf (v).}
From \cref{clm1:s>1}~(iii) and \cref{ineq:pc1}, we infer that $|\partial_{H}C_i|\ge D-l$ for all $i\in[q-1]$.
Together with \cref{clm1:s>1}~(i), (ii), and \cref{ub:PS}, we deduce that 
\[
|\partial_{H} C_q|
\lle (D-l+1)s-(q-1)(D-l)
\=s+l-D.
%\lle \frac{n}{8}-1.
\]
Together with \cref{range:deg} and \cref{clm1:s>1}~(i) and (iv), we infer that
\begin{align*}%\label[ineq]{Cq_mindeg} 
\delta_{C_q}(C_q)
&\gge \delta_H(C_q)-|\partial_HC_q |\\[5pt]
&\gge (D-l)-(s+l-D)
\=2D-2l-s\\
&\gge D-l
\>\frac{c_q}{2}.
\end{align*}
By \cref{cor:Ore}, the subgraph $C_q$ is Hamiltonian-connected. 

This completes the proof of \cref{clm1:s>1}.
\qed
\medskip

\begin{clm}\label{clm:M:s>1}
There exists a matching $M_0\in \M$ such that 
\begin{equation}\label[ineq]{def:M0:s>=2}
|\partial_{M_0}C_q|\gge 3.
\end{equation} 
%In particular, we have $l\ge 1$.
\end{clm}

By \cref{clm1:s>1}~(iv), we see that $n/4+1\le c_q\le D$.
Therefore, \cref{lb:partialCi} implies that
\[
|\partial_GC_q|\gge c_q(D-c_q+1)\gge D.
\]
Assume that $|\partial_MC_q|\leq 1$ for all $M\in \M$. 
By using \cref{clm1:s>1}~(v), we deduce that
\[
s-D+l
\gge|\partial_HC_q|
\=|\partial_GC_q|-\sum_{M\in\M}|\partial_MC_q|
\gge D-l,
\]
which implies that $s\ge n/2$ by \cref{lb:D-l}, contradicting \cref{s<=n/2-1}.
Hence, there exists a matching $M_0\in\M$ such that $|\partial_{M_0}C_q|\ge 2$.
Since the component $C_q$ is of odd order, the cardinality $|\partial_MC_q|$ is odd for all matchings $M$.
Thus $|\partial_{M_0}C_q|\ge 3$. This proves \cref{clm:M:s>1}.
\qed
\bigskip

Denote $U=\cup_{i=1}^{q-1}V(C_i)$. 
From \cref{clm1:s>1}~(iii), we see that the set~$U$ consists of $(s+1)$ isolated vertices in the graph~$H$.
Now the graph $H$ has three parts $S$, $U$, and $C_q$.
Denote by $F$ the bipartite graph with vertex parts $S$ and $U$,
and with edge set $E_{H}(S,U)$. It can be obtained alternatively from the graph $H-C_q$ by removing 
the edges among vertices in the set~$S$.
 
By \cref{clm:M:s>1}, we can take a matching $M_0\in\M$ subject to \cref{def:M0:s>=2}.
Since the perfect matching $M_0$ covers the vertices of the set $U$, we have
\begin{equation}\label{dcps:U}
s+1\=|U|\= e_{M_0}(U,S)+e_{M_0}(U,C_q)+2e_{M_0}(U,U).
\end{equation}
For the same reason, we have 
\begin{equation}\label[ineq]{dcps:S}
s\=e_{M_0}(S,U)+e_{M_0}(S,C_q)+2e_{M_0}(S,S)
\gge e_{M_0}(S,U)+e_{M_0}(S,C_q).
\end{equation}
Subtracting \cref{dcps:U} from \cref{dcps:S}, and by using \cref{def:M0:s>=2}, we obtain
\[
-1\gge e_{M_0}(S,C_q)-e_{M_0}(U,C_q)-2e_{M_0}(U,U)\gge 3-2e_{M_0}(U,C_q)-2e_{M_0}(U,U).
\]
It follows that 
\begin{equation}\label[ineq]{lb:UU}
e_{M_0}(U,U)\gge 2-e_{M_0}(U,C_q). 
\end{equation}

Below we have three subcases to treat.
In each of them, we will apply \cref{lem:Hall} twice, 
taking $k\in\{0,1\}$ and $d\in\{D-l,\,D-l-1\}$.
Here we verify the condition $d\ge(s+k)/2+1$ and $d>=k+1$, as 
\begin{equation}\label{check:cond:s>=2}
D-l-1\gge \frac{s+1}{2}+1
\rmand
D-l-1\gge 2,
\end{equation}
whose truth can be seen directly from \cref{lb:D-l,ub:s,D-l>=9}.
In this way, we obtain two disjoint perfect matchings in the graph $H\cup M_0$,
contradicting the choice the family~$\M$.

\begin{subcase}\label{subcase:e>1}  
Suppose that $e_{M_0}(U,C_q)\ge 2$.
\smallskip

Let $e_{21}, e_{22}\in E_{M_0}(U,C_q)$. 
Note that we use the first subscript $2$ to indicate we are in the subcase with the assumption $e_{M_0}(U,C_q)\ge 2$.
See \cref{fig:subcase1}.
\begin{figure}[htbp]
\centering
\begin{tikzpicture}
%\draw[help lines,use as bounding box] (0, 0) grid (12,5);
\coordinate (U) at (-.5, .5);
\coordinate (S) at (-.5, 3.5);
\coordinate (Cq) at (11.5, 2);
\coordinate (u1) at (1, .5);
\coordinate (u2) at (2, .5);
\coordinate (u3) at (3, .5);
\coordinate (u4) at (4, .5);
\coordinate (s2) at (2, 3.5);
\coordinate (s3) at (3, 3.5);
\coordinate (s4) at (4, 3.5);
\coordinate (cu) at (8, .5);
\coordinate (cs) at (8, 3.5);
\coordinate (eSC) at (6, 3.5);
\coordinate (eUC) at (6, .5);
\coordinate (eUSC) at (6, .5);
\coordinate (M) at (2.75, 2);
\draw[fill=yellow!20] (9,2) ellipse [x radius=2cm, y radius=2.5cm]; %Cq
\draw[fill=red!20] (3,3.5) ellipse [x radius=2cm, y radius=1cm]; %S
\draw[fill=blue!20] (2.5,.5) ellipse [x radius=2.5cm, y radius=1cm]; %U
\draw[very thick]%matching
(u4) -- (cu)
;
\draw[very thick] decorate [decoration=snake, segment length=2mm] {(u3) -- (cs)};
\draw decorate [decoration=random steps]%Hamilton path
{(cs) -- ++(2,0) -- ++(0,-1) -- ++(-2,0) -- ++(0,-1) -- ++(2,0) -- ++(0,-1) -- (cu)
(2.5, 1.5) -- (3, 2.5)};
\draw
(S)node[]{$S$}
(U)node[]{$U$}
(Cq)node[]{$C_q$}
(M)node[right]{$M_{2i}'$}
(9.5,2)node[]{$P_2=M_{21}\cup M_{22}$}
(eUC)node[above]{$e_{21}$}
;
\node [label=above:$e_{22}$] at ($(u3)! .5 !(cs)$) {};
\foreach \point in 
{u3,u4,
cu,cs}
\fill[black,opacity=1] (\point) circle (3pt);
\end{tikzpicture}
\caption{The perfect matchings $M_{21}\cup M_{21}'\cup \{e_{21}\}$ and 
$M_{22}\cup M_{22}'\cup \{e_{22}\}$.}\label{fig:subcase1}
\end{figure}
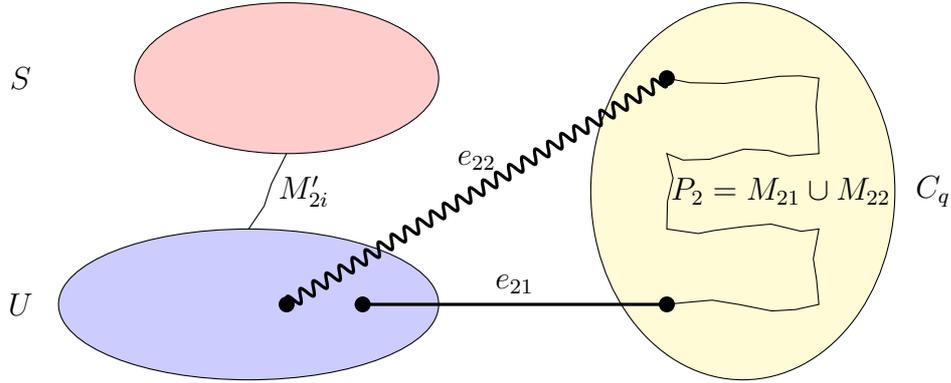

By \cref{clm1:s>1}~(v),
the component $C_q$ has a Hamiltonian path, say,~$P_2$, from the vertex $V(e_{21})\cap V(C_q)$ 
to the vertex $V(e_{22})\cap V(C_q)$.
For $i=1,2$, since the path $P_2-V(e_{2i})$ has an even number of vertices, 
it has a unique perfect matching, say,~$M_{2i}$.

In \cref{lem:Hall}, we take
\[
d=D-l,\quad
k=0,\quad
G'=F,\quad
S'=\emptyset,\rmand
U'=V(e_{21})\cap U.
\]
In the graph $F$, by \cref{range:deg},
every vertex in the set~$S$ has degree at most $(D-l+1)$, and the minimum degree $\delta_F(U)$ is at least $(D-l)$.
In view of \eqref{check:cond:s>=2}, 
we infer from \cref{lem:Hall} that the graph $F-V(e_{21})$ has a perfect matching, say,~$M_{21}'$.
Now, we take
\[
d=D-l-1,\quad
k=0,\quad
G'=F-M_{21}',\quad
S'=\emptyset,\rmand
U'=V(e_{22})\cap U.
\]
Consider the graph $F-M_{21}'$.
Since the matching $M_{21}'$ is perfect, by \cref{range:deg},
every vertex in the set~$S$ has degree at most $(D-l)$,
and that the minimum degree $\delta_{F-M_{21}'}(U)$ is at least $(D-l-1)$.
Again, \cref{lem:Hall} provides a perfect matching~$M_{22}'$ of the graph $F-V(e_{22})-M_{21}'$.

From definition, we obtain two disjoint perfect matchings
\[
M_{2i}''\=M_{2i}\cup M_{2i}'\cup \{e_{2i}\}\qquad (i=1,2),
\]
of the graph $H\cup M_0$. 
As a consequence, the family 
$(\M-M_0)\cup \{M_{21}'',\,M_{22}''\}$ consists of $(l+1)$ disjoint perfect matchings,
contradicting the choice of the family~$\M$.
\end{subcase}

\begin{subcase}\label{subcase:e=0} 
Suppose that $e_{M_0}(U,C_q)=0$.
\smallskip

In this case, by \cref{def:M0:s>=2}, we have $e_{M_0}(S,C_q)\ge 3$.
Thus we can choose two edges $e_{01},e_{02}\in E_{M_0}(S,C_q)$. 
See \cref{fig:subcase2}.
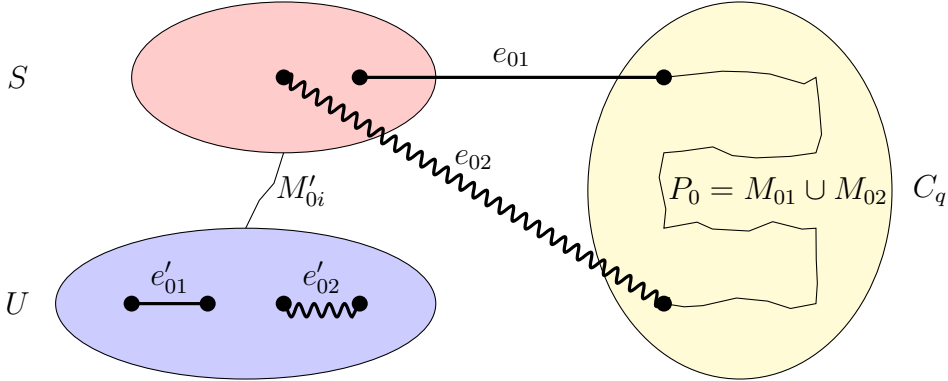
\begin{figure}[htbp]
\centering
\begin{tikzpicture}
%\draw[help lines,use as bounding box] (0, 0) grid (12,5);
\coordinate (U) at (-.5, .5);
\coordinate (S) at (-.5, 3.5);
\coordinate (Cq) at (11.5, 2);
\coordinate (u1) at (1, .5);
\coordinate (u2) at (2, .5);
\coordinate (u3) at (3, .5);
\coordinate (u4) at (4, .5);
\coordinate (s2) at (2, 3.5);
\coordinate (s3) at (3, 3.5);
\coordinate (s4) at (4, 3.5);
\coordinate (cu) at (8, .5);
\coordinate (cs) at (8, 3.5);
\coordinate (eSC) at (6, 3.5);
\coordinate (eUC) at (6, .5);
\coordinate (eUSC) at (6, .5);
\coordinate (e01') at (1.5, .5);
\coordinate (e02') at (3.5, .5);
\coordinate (M) at (2.75, 2);
\draw[fill=yellow!20] (9,2) ellipse [x radius=2cm, y radius=2.5cm]; %Cq
\draw[fill=red!20] (3,3.5) ellipse [x radius=2cm, y radius=1cm]; %S
\draw[fill=blue!20] (2.5,.5) ellipse [x radius=2.5cm, y radius=1cm]; %U
\draw[very thick]%matching
(s4) -- (cs)
(u1) -- (u2)
;
\draw[very thick] decorate [decoration=snake, segment length=2mm] {(s3) -- (cu) (u3) -- (u4)};
\draw decorate [decoration=random steps]%Hamilton path
{(cs) -- ++(2,0) -- ++(0,-1) -- ++(-2,0) -- ++(0,-1) -- ++(2,0) -- ++(0,-1) -- (cu)
(2.5, 1.5) -- (3, 2.5)};
\draw
(S)node[]{$S$}
(U)node[]{$U$}
(Cq)node[]{$C_q$}
(M)node[right]{$M_{0i}'$}
(9.5,2)node[]{$P_0=M_{01}\cup M_{02}$}
(eSC)node[above]{$e_{01}$}
(e01')node[above]{$e_{01}'$}
(e02')node[above]{$e_{02}'$};
\node [label=above:$e_{02}$] at ($(s3)! .5 !(cu)$) {};
\foreach \point in 
{u1,u2,u3,u4,
s3,s4,
cu,cs}
\fill[black,opacity=1] (\point) circle (3pt);
\end{tikzpicture}
\caption{The perfect matchings $M_{0i}\cup M_{0i}'\cup \{e_{0i},\,e_{0i}'\}$ ($i=1,2$).}\label{fig:subcase2}
\end{figure}

By \cref{clm1:s>1}~(v),
the component $C_q$ has a Hamiltonian path, say,~$P_0$, from the vertex $V(e_{01})\cap V(C_q)$ 
to the vertex $V(e_{02})\cap V(C_q)$.
Same to \cref{subcase:e>1}, for $i=1,2$,
we denote by $M_{0i}$ the unique perfect matching of the path $P_0-V(e_{0i})$.
From \cref{lb:UU}, we infer that $e_{M_0}(U,U)\gge 2$.
Thus, we can pick edges $e_{01}',e_{02}'\in E_{M_0}(U,U)$.
In \cref{lem:Hall}, we take
\[
d=D-l,\quad
k=1,\quad
G'=F,\quad
S'=V(e_{01})\cap S,\rmand
U'=V(e_{01}').
\]
Same to \cref{subcase:e>1}, the graph $F-V(e_{01})-V(e_{01}')$ has a perfect matching, say,~$M_{01}'$.
Then, we take
\[
d=D-l-1,\quad
k=1,\quad
G'=F-M_{01}',\quad
S'=V(e_{02})\cap S,\rmand
U'=V(e_{02}').
\]
Note that in the graph $F-M_{01}'$, 
the vertex in the set $V(e_{01})\cap S$ has degree at most $(D-l+1)$,
every other vertex in the set~$S$ has degree at most $(D-l)$, 
and that the minimum degree $\delta_{F-M_{01}'}(U)$ is at least $(D-l-1)$.
Again, \cref{lem:Hall} offers a perfect matching~$M_{02}'$ of the graph $F-V(e_{02})-V(e_{02}')$.
From definition, we obtain two disjoint perfect matchings
$M_{0i}\cup M_{0i}'\cup \{e_{0i},\,e_{0i}'\}$ $(i=1,2)$
of the graph $H\cup M_0$, the same contradiction as in Subcase 1.1.
\end{subcase}

\begin{subcase}\label{subcase:e=1}
Suppose that $e_{M_0}(U,C_q) = 1$.
\smallskip

In this case, we can choose an edge $e_{11}\in E_{M_0}(U,C_q)$. See \cref{fig:subcase3}.
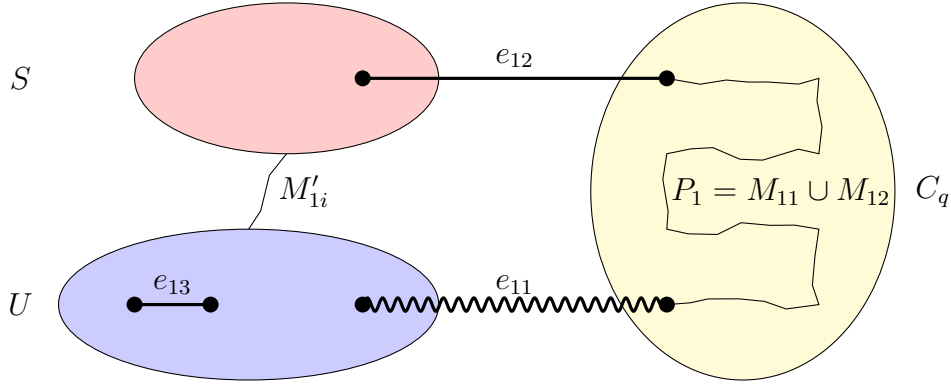
\begin{figure}[htbp]
\centering
\begin{tikzpicture}
%\draw[help lines,use as bounding box] (0, 0) grid (12,5);
\coordinate (U) at (-.5, .5);
\coordinate (S) at (-.5, 3.5);
\coordinate (Cq) at (11.5, 2);
\coordinate (u1) at (1, .5);
\coordinate (u2) at (2, .5);
\coordinate (u3) at (3, .5);
\coordinate (u4) at (4, .5);
\coordinate (s2) at (2, 3.5);
\coordinate (s3) at (3, 3.5);
\coordinate (s4) at (4, 3.5);
\coordinate (cu) at (8, .5);
\coordinate (cs) at (8, 3.5);
\coordinate (eSC) at (6, 3.5);
\coordinate (eUC) at (6, .5);
\coordinate (eUSC) at (6, .5);
\coordinate (e01') at (1.5, .5);
\coordinate (e02') at (3.5, .5);
\coordinate (M) at (2.75, 2);
\draw[fill=yellow!20] (9,2) ellipse [x radius=2cm, y radius=2.5cm]; %Cq
\draw[fill=red!20] (3,3.5) ellipse [x radius=2cm, y radius=1cm]; %S
\draw[fill=blue!20] (2.5,.5) ellipse [x radius=2.5cm, y radius=1cm]; %U
\draw[very thick]%matching
(s4) -- (cs)
(u1) -- (u2)
;
\draw[very thick] decorate [decoration=snake, segment length=2mm] {(u4) -- (cu)};
\draw decorate [decoration=random steps]%Hamilton path
{(cs) -- ++(2,0) -- ++(0,-1) -- ++(-2,0) -- ++(0,-1) -- ++(2,0) -- ++(0,-1) -- (cu)
(2.5, 1.5) -- (3, 2.5)};
\draw
(S)node[]{$S$}
(U)node[]{$U$}
(Cq)node[]{$C_q$}
(M)node[right]{$M_{1i}'$}
(9.5,2)node[]{$P_1=M_{11}\cup M_{12}$}
(eUC)node[above]{$e_{11}$}
(eSC)node[above]{$e_{12}$}
(e01')node[above]{$e_{13}$};
\foreach \point in 
{u1,u2,u4,
s4,
cu,cs}
\fill[black,opacity=1] (\point) circle (3pt);
\end{tikzpicture}
\caption{The perfect matchings $M_{11}\cup M_{11}'\cup \{e_{11}\}$
and $M_{12}\cup M_{12}'\cup \{e_{12},\,e_{13}\}$.}\label{fig:subcase3}
\end{figure}

From \cref{def:M0:s>=2}, we infer that $e_{M_0}(C_q,S)\ge 2$,
which allows us to pick an edge $e_{12}\in E_{M_0}(C_q,S)$ such that $V(e{11})\cap V(e_{12})=\emptyset$.
Same to \cref{subcase:e>1}, let $P_1$ be a Hamiltonian path 
from the vertex $V(e_{11})\cap V(C_q)$ to the vertex $V(e_{12})\cap V(C_q)$.
Denote by~$M_{1i}$ the perfect matching of the path $P_1-V(e_{1i})$ for $i=1,2$.
Taking
\[
d=D-l,\quad
k=0,\quad
G'=F,\quad
S'=\emptyset,\rmand
U'=V(e_{11})\cap U,
\]
we infer from \cref{lem:Hall} that the graph $F-V(e_{11})$ has a perfect matching, say,~$M_{11}'$.
By \cref{lb:UU}, we have $e_{M_0}(U,U)\ge 1$. Let $e_{13}\in E_{M_0}(U,U)$.
Then, we put
\[
d=D-l-1,\quad
k=1,\quad
G'=F-M_{11}',\quad
S'=V(e_{12})\cap S,\rmand
U'=V(e_{13}).
\]
Again, \cref{lem:Hall} results in a perfect matching $M_{12}'$ of the graph $F-V(e_{11})-V(e_{12})-V(e_{13})$.
From definition, we obtain two disjoint perfect matchings
\[
M_{11}\cup M_{11}'\cup \{e_{11}\}
\rmand 
M_{12}\cup M_{12}'\cup \{e_{12},\,e_{13}\}
\] 
are disjoint perfect matchings of the graph $H\cup M_0$, the same contradiction.
\end{subcase}

This completes the proof for \cref{case:s>1}.
\end{case}

\bigskip

\begin{case}\label{case:s=1} $s=1$.
\medskip

Before dealing with the other cases $s=1$ and $s=0$,
we give some common properties for these two cases.
Let $j\in[q]$.
Every vertex in the subgraph~$H[C_j]$ has at most~$s$ neighbors outside~$C_j$.
Therefore,
by \cref{range:deg}, every vertex in~$H[C_j]$ has at least $(D-l-s)$ neighbors inside~$C_j$. In other words,
\begin{equation}\label[ineq]{delta:s<=1}
\delta_{C_j}(C_j)\gge D-l-s\gge \Bcl{\nf}-s.
\end{equation}
It follows that 
\begin{equation}\label[ineq]{lb:c:s=01}
c_j\gge \delta_{C_j}(C_j)+1\gge D-l-s+1\gge \Bcl{\nf}-s+1.
\end{equation}
From \cref{count:vertex} and that $s\in\{0,1\}$, we have
\[
n\=s+\sum_{j=1}^qc_j\gge s+q\cdot\Bg{\nf-s+1}
\>q\cdot\nf.
\]
It follows that $q\le 3$. From \cref{q>=s+2,mod:qs}, we infer that 
\begin{equation}\label{q=s+2:s=01}
q\=s+2.
\end{equation}

From \cref{clm:M:s>1}, we see that the graph $G$ has a perfect matching if $s\ge 2$.
In fact, this is also true for $s\in\{0,1\}$.

\begin{clm}\label{clm:l>0}
Let $s\in\{0,1\}$. Then the graph $G$ has a perfect matching, i.e., we have $l\ge 1$. 
\end{clm}

By \cref{count:vertex,order:c,delta:s<=1,q=s+2:s=01}, we find
\begin{equation}\label[ineq]{pf:l>0}
n\=s+\sum_{i=1}^qc_i\gge s+(s+2)\cdot c_1
\gge s+(s+2)\cdot (D-l-s+1).
\end{equation}
Assume that $l=0$.
For $s=1$, \cref{pf:l>0} implies $n\ge 1+3D\ge 1+3(n/2-1)$, contradicting $n\ge34$.
For $s=0$, \cref{pf:l>0} implies $n\ge 2(D+1)=4\cl{n/4}\ge n$. Thus the equality in \cref{pf:l>0} holds.
In particular, we have $n\equiv0\pmod{4}$ and $c_1=D+1=n/2$ is even, contradicting \cref{mod:ci}.
This proves \cref{clm:l>0}.
\qed

\medskip

From \cref{q=s+2:s=01}, we have $q=3$.
We rename the components $C_1$, $C_2$, and $C_3$ by $T_1$, $T_2$, and~$T_3$, so that 
\begin{equation}\label[ineq]{order:T}
e_{H}(S,\,T_3)\=\max_{1\le i\le 3} e_{H}(S,\,C_i).
%e_{H}(S,T_1)\le e_{H}(S,T_2)\le e_{H}(S,T_3).
\end{equation}
Denote $|T_i|=t_i$. 
This case $s=1$ will be handled by presenting a family of disjoint perfect matchings larger than $\M$.
To do this, we will discover a matching $M\in\M$ such that the graph $H\cup M$ has two disjoint perfect matchings.
\Cref{clm:HC:s=1,clm:M:s=1} will be of use.

\begin{clm}\label{clm:HC:s=1}
We have 
\begin{align*}
\Bcl{\nf}+1&\lle t_i\lle \nt-3,\qquad\text{for $i=1,2$}, \rmand\\[5pt]
\Bcl{\nf}&\lle t_3\lle \nt-3.
\end{align*}
As a consequence, every component~$T_j$ ($j=1,2,3$) is Hamiltonian-connected.
\end{clm}

From \cref{lb:c:s=01}, we obtain the desired lower bound of~$t_3$ directly.
Assume that $t_i=\cl{n/4}$ for some $i\in\{1,2\}$.
Let $S=\{v^*\}$.
By \cref{range:deg}, every vertex in the component~$T_i$ is a neighbor of the vertex~$v^*$.
Thus $e_{H}(S,T_i)\ge t_i$. Therefore, by \cref{order:T}, we have
\[
\deg_H(v^*)\=\sum_{j=1}^3e_H(S,\,T_j)\gge e_H(S,T_i)+e_H(S,T_3)\gge 2t_i\=2\Bcl{\nf}.
\]
By \cref{range:deg}, we find $l=0$, contradicting \cref{clm:l>0}.
Hence, both integers~$t_1$ and~$t_2$
have the lower bound $\cl{n/4}+1$.

By the lower bounds of $t_i$ that just obtained, we infer that
\[
t_3\=|G-S-T_1-T_2|\lle n-1-\Bg{\nf+1}-\Bg{\nf+1}\= \nt-3,
\]
the desired upper bound of $t_3$.
Along the same line, we have 
\[
t_1\=|G-S-T_2-T_3|\lle n-1-\Bg{\nf+1}-\nf\=\nt-2.
\]
If $t_1=n/2-2$, i.e., if the equality in the above inequality holds, 
then $t_2=n/4+1$ and $t_3=n/4$, having different parities.
But this is impossible since the order of every component $T_i$ has odd parity.
This confirms the desired upper bound of $t_1$.
The desired upper bound of $t_2$ can be shown in the same fashion. 

Let $j\in[3]$. By \cref{delta:s<=1}, we have
\[
2\delta_{T_j}(T_j)
\gge 2\Bg{\nf-1}
\gge t_j+1.
\]
By \cref{cor:Ore}, every component $T_j$ is Hamiltonian-connected.
This proves \cref{clm:HC:s=1}.
\qed

\begin{clm}\label{clm:M:s=1}
There is a matching $M\in\M$ such that $e_M(T_1,T_2)\ge2$.
\end{clm}

We estimate the number of edges between the sets $T_1\cup T_2$ and $S\cup T_3$.
On the one side, from \cref{range:deg,order:T}, we infer that 
\[
|\partial_{H}(S\cup T_3)|
\=\sum_{i=1}^{2}e_{H}(S,T_i)
\lle \frac{2}{3}\sum_{i=1}^{3}e_{H}(S,T_i)\=\frac{2}{3}\deg_{H}(v^*)
\lle \frac{2}{3}(D+1-l).
\]
Therefore, we have
\begin{align}
|\partial_G(S\cup T_3)|
&\=|\partial_{H}(S\cup T_3)|+|\partial_{G-H}(S\cup T_3)|
\lle|\partial_{H}(S\cup T_3)|+|S\cup T_3|\cdot|\M|\notag\\
&\lle \frac{2}{3}(D+1-l)+(n-t_1-t_2)\cdot l.\label[ineq]{pf:partial2}
\end{align}
On the other hand, assume that \cref{clm:M:s=1} is false. Then $e_M(T_1,T_2)\le1$ for every matching $M\in\M$.
It follows that 
\[
e_G(T_1,T_2)\=e_H(T_1,T_2)+e_{G-H}(T_1,T_2)
\=0+\sum_{M\in\M}e_{M}(T_1,T_2)
\lle |\M|\= l.
\] 
Therefore, we have
\begin{align}
|\partial_G(T_1\cup T_2)|
&\=\sum_{\mathrlap{v\in T_1\cup T_2}}\deg_G(v)-\sum_{i=1}^{2}\sum_{v\in T_i}\deg_{T_i}(v)-2e_G(T_1,T_2)\notag\\
&\gge D\cdot (t_1+t_2)-\sum_{i=1}^2t_i(t_i-1)-2l.\label[ineq]{pf:partial1}
\end{align}

Combining \cref{pf:partial1,pf:partial2} with the identity $\partial (T_1\cup T_2)=\partial (S\cup T_3)$, we infer that
\begin{equation}\label[ineq]{pf180}
\frac{2}{3}(D+1-l)+(n-t_1-t_2)\cdot l
-\biggl(D\cdot(t_1+t_2)-\sum_{i=1}^2t_i(t_i-1)-2l\biggr)\gge0.
\end{equation}
Since the coefficient of $l$ in the left hand side of \cref{pf180} is 
$-2/3+(n-t_1-t_2)+2>0$,
and since the coefficient of $D$ in the left hand side of the above inequality is 
$2/3-(t_1+t_2)<0$,
we can substitute $l$ by its upper bound $(n-2)/4$,
and substitute $D$ by its lower bound $n/2-1$ into \cref{pf180}, which gives
\begin{equation}\label[ineq]{pf152}
f(t_1)+f(t_2)+\Bg{\frac{n^2}{4}+\frac{n}{6}-\frac{2}{3}}\gge0,
\end{equation}
where 
\[
f(t)\=t^2+\Bigl(-\frac{3n}{4}+\frac{1}{2}\Bigr)t.
\]
From the domain of $t_i$ ($i=1,2$) obtained in \cref{clm:HC:s=1}, 
and since $n\ge 34$,
it is elementary to derive that the quadratic function $f(t_i)$ has upper bound $f(n/4+1)$.
%\[
%f(t_i)\lle f\Bigl(\frac{n}{4}+1\Bigr)\=-\frac{n^2}{8}-\frac{n}{2}.
%\]
From \cref{pf152}, we obtain
\[
2f\Bigl(\frac{n}{4}+1\Bigr)+\Bg{\frac{n^2}{4}+\frac{n}{6}-\frac{2}{3}}\gge0,
\]
which reduces to $n\le 28$, a contradiction to the premise $n\ge 34$. This proves \cref{clm:M:s=1}. \qed
\medskip

By \cref{clm:M:s=1}, we can suppose that $e_1,e_2\in E_M(T_1,T_2)$.
%$e_i=v_{1i}v_{2i}\in M$ for $i\in\{1,2\}$, where $v_{ki}\in V(T_k)$ for $k=1,2$.
By \cref{clm:HC:s=1}, the component~$T_i$ has a Hamiltonian path~$p_i$  
from the vertex $V(T_i)\cap V(e_1)$ to the vertex~$V(T_i)\cap V(e_2)$.
%~$v_{1i}$ to the vertex~$v_{2i}$.
Thus we obtain a Hamiltonian cycle $h_1=(p_1,\,e_2,\,p_2,\,e_1)$ of the subgraph $T_1\cup T_2\cup\{e_1,e_2\}$.
Since both the orders~$t_1$ and $t_2$ are odd, the length $(t_1+t_2)$ of the cycle $h_1$ is even.
See \cref{fig:s=1}.

On the other hand, from \cref{order:T,range:deg}, we have
\[
e_H(S,T_3)\gge \frac{1}{3}\deg_{H}(v^*)
\gge \frac{1}{3}\Bcl{\nf}.
\] 
Since $n\ge 34$, we have $e_H(S,T_3)\ge 3$.
Let~$v_{31}$ and~$v_{32}$ be two neighbors of the vertex~$v^*$ in the component~$T_3$.
By \cref{clm:HC:s=1} again, the component~$T_3$ has a Hamiltonian path~$p_3$ from
the vertex $v_{31}$ to the vertex $v_{32}$.
This gives a Hamiltonian cycle $h_2=(p_3,\,v_{32}v^*v_{31})$ of the subgraph $H[S\cup V(T_3)]$.
Since the order $t_3$ is odd, the length $t_3+1$ of the cycle~$h_2$ is even.

\begin{figure}[htbp]
\centering
\begin{tikzpicture}
\coordinate (e11) at (1.5, 1);
\coordinate (e12) at (3.5, 1);
\coordinate (e21) at (1.5, 3);
\coordinate (e22) at (3.5, 3);
\coordinate (e31) at (7.5, 1);
\coordinate (e32) at (7.5, 3);
\coordinate (v) at (9, 2);
\draw[fill=blue!20] (1, 2) ellipse [x radius=1cm, y radius=2cm]; %Cq
\draw[fill=red!20] (4, 2) ellipse [x radius=1cm, y radius=2cm]; %S
\draw[fill=yellow!20] (7, 2) ellipse [x radius=1cm, y radius=2cm]; %U
\draw[help lines,use as bounding box] (0, 0) grid (9, 4);
\draw[very thick]%matching
(e31) -- (v)
(e11) -- (e12)
;
\draw[very thick] decorate [decoration=snake, segment length=2mm] {(e32) -- (v) (e21) -- (e22)};
\draw decorate [decoration=random steps]%Hamilton path
{(e11) -- ++(-1, 0) -- (1, 2) -- (.5, 3) -- ++(1, 0)
(e12) -- ++(1, 0) -- (4, 2) -- (4.5, 3) -- ++(-1, 0)
(e32) -- ++(-1, 0) -- (7, 2) -- (6.5, 1) -- ++(1, 0)
};
\draw
(v)node[right=2mm]{$v^*$}
(e31)node[below]{$v_{31}$}
(e32)node[above]{$v_{32}$}
(2.5, 1)node[above]{$e_1$}
(2.5, 3)node[above]{$e_2$}
;
\foreach \point in 
{e11,e12,e21,e22,e31,e32,v}
\fill[black,opacity=1] (\point) circle (3pt);
\end{tikzpicture}
\caption{The perfect matching union $M_1\cup M_2$.}\label{fig:s=1}
\end{figure}
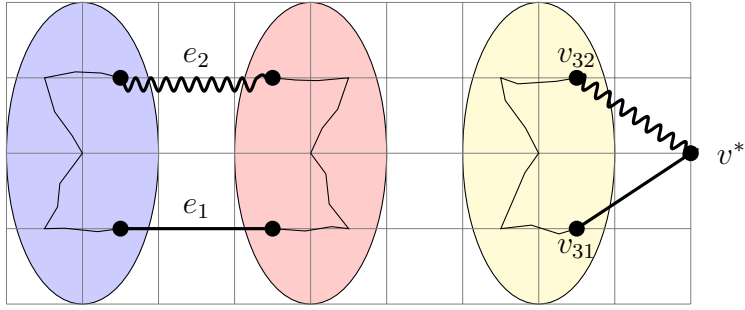

Note that the union of the even cycles $h_1$ and $h_2$ can be decomposed into two disjoint perfect matchings,
say, $M_1$ and $M_2$, of the graph $H\cup M$.
Then the family 
$(\M\cup \{M_1, M_2\})-M$
consists of $(l+1)$ disjoint perfect matchings, contradicting the choice of~$\M$.
This completes the proof for \cref{case:s=1}.
\end{case}

\begin{case}\label{case:s=0} $s=0$.
\medskip

From \cref{q=s+2:s=01}, we infer that $q=2$.
In other words, the graph $H$ consists of factor-critical components $C_1$ and $C_2$.
\Cref{clm:CC:s=0} will be used several times for solving \cref{case:s=0}.

\begin{clm}\label{clm:CC:s=0}
For any matching $M\in\M$ and for any perfect matching $M'$ of the graph $H\cup M$,
the graph $(H\cup M)-M'$ consists of two factor-critical components of orders at least $\cl{n/4}+1$.
\end{clm}

Let $M\in\M$, and let $M'$ be a perfect matching of the graph $H\cup M$.
From the choice of the family~$\M$, we infer that
the subgraph $(H\cup M)-M'$ has no perfect matchings.
By \cref{thm:Berge}, there is a vertex set~$S'$ 
such that the graph $H'-S'$ consists of $q'$ factor-critical components.
If $S'\ne\emptyset$, then one may consider the family $(\M-M)\cup\{M'\}$ of disjoint perfect matchings 
instead of the family~$\M$, as in the previous proofs for \cref{case:s>1,case:s=1}.
Therefore, we can suppose that $S'=\emptyset$.
Along the same lines, we are led to $q'=2$.
In analog with \cref{lb:c:s=01}, we find each component has order at least $\cl{n/4}+1$.
This proves \cref{clm:CC:s=0}.
\medskip

%Therefore, with \cref{lb:c1':s=0,Vij>=n/4}, we derive that
%\begin{equation}\label[ineq]{lb:c2:s=0}
%c_2\=|C_1'|+|V_{22}|\gge \Bg{\nf+1}+\nf\=\nt+1.
%\end{equation}
%It follows from \cref{count:vertex} that 
%\begin{equation}\label[ineq]{ub:c1:s=0:enhanced}
%c_1\lle \nt-1.
%\end{equation}
%This enhances \cref{ub:c1:s=0}. 

From \cref{lb:partialCi}, we infer that 
\begin{equation}\label[ineq]{sumM>=}
\sum_{M\in\M}e_M(C_1,C_2)
\=e_G(C_1,C_2)
\gge c_i(D-c_i+1)
\= c_i\cdot \bgg{2\Bcl{\nf}-c_i}.
\end{equation}
Since $c_1\le c_2$, we have $c_1\le n/2$.
If $c_1=n/2$, then the integer $n/2$, as the order of the factor-critical component, is odd.
Then \cref{sumM>=} becomes
\[
\sum_{M\in\M}e_M(C_1,C_2)
\gge c_i\cdot \Bg{\nt+1-c_i}
\=\nt.
\]
Otherwise, by \cref{lb:c:s=01}, we have $n/4+1\le c_1\le n/2-1$. In this case, \cref{sumM>=} implies
\[
\sum_{M\in\M}e_M(C_1,C_2)\gge c_i\cdot \Bg{\nt-c_i}\gge \nt-1.
\]
Anyway, the sum on the left hand side of \cref{sumM>=} is at least $n/2-1$.
Consequently, by \cref{clm:l>0} that $l\ge 1$, and by the assumption $l\le \cl{n/4}-1$, 
there exists a matching $M_0\in \M$ such that 
\[
e_{M_0}(C_1,\,C_2)\gge \frac{n/2-1}{l}\gge 2.
\]
Since the order $c_1$ is odd, and the matching $M_0$ is perfect, the integer $e_{M_0}(C_1,C_2)$ must be odd. 
Thus, the above lower bound can be enhanced to 
\begin{equation}\label[ineq]{def:M:s=0}
e_{M_0}(C_1,\,C_2)\gge 3.
\end{equation}
Let $e_0\in e_{M_0}(C_1,C_2)$. 
Since each of the components $C_i$ is factor-critical,
the subgraph $C_i-V(e_0)$ has a perfect matching, say, $M_{0i}$. 
Thus, the graph $H\cup M_0$ has the perfect matching 
\[
M_0'\=M_{01}\cup M_{02}\cup \{e_0\}.
\]
We further denote 
\begin{align*}
H'&\=(H\cup M_0)-M_0',\rmand\\
F&\=H'\cup M_0'\=H\cup M_0.
\end{align*}
By \cref{clm:CC:s=0}, we can suppose that the graph $H'$ consists of factor-critical components~$C_1'$ and~$C_2'$,
such that 
\begin{equation}\label[ineq]{lb:c1':s=0}
\nf+1\lle |C_1'|\lle |C_2'|.
\end{equation}
Denote
\[
V_{ij}\=V(C_i)\cap V(C_j').
\]
From \cref{def:M:s=0} and the definition of the matching $M_0'$, one may verify \cref{cond:lem} directly. 
Thus, by \cref{lem:CC:s=0}, we infer that 
\begin{equation}\label[rl]{C1'ssC2:s=0}
V(C_1')\ss V(C_2).
\end{equation}
On the other hand,
from \cref{lb:c:s=01,lb:c1':s=0}, we infer that 
\begin{equation}\label[ineq]{ub:V22}
|V_{22}|\=n-c_1-|C_1'|\lle n-\Bg{\nf+1}-\Bg{\nf+1}\=\nt-2.
\end{equation}
%Since $\partial_F V_{22}\subseteq M\cup M$, we infer that
From \cref{delta:Vij,ub:V22}, we infer that
\begin{equation}\label[ineq]{Hamilton:V22}
\delta_H(V_{22})\gge\nf-1\gge\frac{|V_{22}|}{2}.
\end{equation}
From \cref{C1'ssC2:s=0}, we see that $V_{22}\ne\emptyset$. 
%Since the orders $c_2$ and $|C_1'|$ are odd, the order $|V_{22}|=c_2-|C_1'|$ is even.
By \cref{Vij>=n/4} and the premise $n\ge 34$, we find $|V_{22}|\ge 9$.
By Dirac's \cref{thm:Dirac},
we conclude that the subgraph $H[V_{22}]$ is Hamiltonian.
Let~$H_{22}$ be a Hamiltonian cycle of the subgraph~$H[V_{22}]$.

We will find another perfect matching in the graph~$F$ in \cref{clm:M'':s=0}, based on \cref{clm:e1e1':s=0}.

\begin{clm}\label{clm:e1e1':s=0}
The graph $F$ contains two edges 
\[
e_1\in E_{M_0-e_0}(C_1,\,V_{22})
\rmand
e_1'\in E_{H}(C_1',\,V_{22}),
\]
%\begin{align}
%e_1&\in E_{M-e_0}(C_1,\,V_{22}),\rmand\label[rl]{def:e1:s=0}\\
%e_1'&\in E_H(C_1',\,V_{22}),\label[rl]{def:e1':s=0}
%\end{align}
such that $V(e_1)\cap V(e_1')=\emptyset$. 
\end{clm}

Recall that every factor-critical graph is $2$-edge-connected.
Since the component~$C_2$ is factor-critical, we infer that
\begin{equation}\label[ineq]{eC1'V22}
e_{H}(C_1',\,V_{22})\gge2.
\end{equation}
To show \cref{clm:e1e1':s=0}, it suffices to show that 
\begin{equation}\label[ineq]{dsr:e1:s=0}
e_{M_0-e_0}(C_1,\,V_{22})\gge 2.
\end{equation}
From the definition $M_0'=M_{01}\cup M_{02}\cup\{e_0\}$, we see that 
\[
E_{M_0}(C_1,C_2)\cap M_0'\=\{e_0\}.
\]
From the definition $H'=(H\cup M_0)-M_0'$, we can deduce that
\[
E_{M_0}(C_1,C_2)-e_0
\;\subset\; E(H').
%\=E_{H'}(C_1')\cup E_{H'}(C_2').
\]
By \cref{C1'ssC2:s=0}, 
we can enhanced the above relation to 
\[
E_{M_0}(C_1,C_2)-e_0\ss E(C_2').
\]
Consequently, we have
\[
E_{M_0}(C_1,C_2)-e_0\;\subset\; E(C_2')\cap E_{M_0-e_0}(C_1,C_2)\=E_{M_0-e_0}(C_1,\,V_{22}).
\]
Hence, the desired \cref{dsr:e1:s=0} follows from \cref{def:M:s=0}.
This proves \cref{clm:e1e1':s=0}.
\medskip

Let $e_1$ and $e_1'$ be two edges subject to \cref{clm:e1e1':s=0}.
The factor-criticality of the component~$C_1$ implies that the subgraph $C_1-V(e_1)$
has a perfect matching, say,~$M_{11}$, in the graph~$H$.
For the same reason, the subgraph $C_1'-V(e_1')$ has a perfect matching, say,~$M_{11}'$, in the graph $H'$.

\begin{clm}\label{clm:M'':s=0}
The graph $F$ has a perfect matching $M''$ such that 
\begin{align}
E_{M_0}(C_1,\,C_2)-M''&\;\neq\;\emptyset,\label[ineq]{cond1:M'':s=0}\rmand\\
E_{M_0'}(C_1',\,C_2')-M''&\;\neq\;\emptyset.\label[ineq]{cond2:M'':s=0}
\end{align}
\end{clm}

We will treat two cases according to whether the equality in \cref{ub:V22} holds or not.
Assume that the equality in \cref{ub:V22} does not hold.
Then the strict inequality in \cref{Hamilton:V22} holds.
By \cref{lem:Plummer},
the subgraph $H[V_{22}]$ is bi-critical. 
In particular, the subgraph $H[V_{22}]-V(e_1)-V(e_1')$ has a perfect matching, say,~$M_{12}$. 
Therefore, the graph~$F$ has the perfect matching 
$M_{11}\cup M_{11}'\cup M_{12}\cup\{e_1,\,e_1'\}$.
See \cref{fig:M1:s=0}.
\begin{figure}[htbp]
\centering
\begin{tikzpicture}
\coordinate (C1) at (-.5, 2.5);
\coordinate (C1') at (5.5, 1);
\coordinate (V22) at (5.5, 4);
\coordinate (L1) at (1, 1);
\coordinate (L2) at (1, 4);
\coordinate (R1) at (3.5, 1);
\coordinate (R2) at (4, 1.5);
\coordinate (R3) at (4, 3.5);
\coordinate (R4) at (3.5, 4);
\coordinate (eVC1) at (2.5, 4);
\coordinate (eVC1') at (4.3, 2.5);

\draw[fill=blue!20] (1, 2.5) ellipse [x radius=1cm, y radius=2.5cm]; %C1
\draw[fill=red!20] (4, 1) circle [radius=1cm]; %C1'
\draw[fill=yellow!20] (4, 4) circle [radius=1cm]; %V22
%\draw[help lines,use as bounding box] (0, 0) grid (12,5);
\draw[very thick]%matching
(L2) -- (R4)
(R2) -- (R3)
;
\draw
(C1)node[]{$C_1$}
(C1')node[]{$C_1'$}
(V22)node[]{$V_{22}$}
(1, 2.5)node[]{$M_{11}$}
(4, .8)node[]{$M_{11}'$}
(4.2, 4.2)node[]{$M_{12}$}
(eVC1)node[above]{$e_1$}
(eVC1')node[]{$e_1'$}
;
\foreach \point in {L2, R2,R3,R4}
\fill[black,opacity=1] (\point) circle (3pt);
\end{tikzpicture}
\caption{The perfect matching $M_{11}\cup M_{11}'\cup M_{12}\cup\{e_1,\,e_1'\}$.}\label{fig:M1:s=0}
\end{figure}
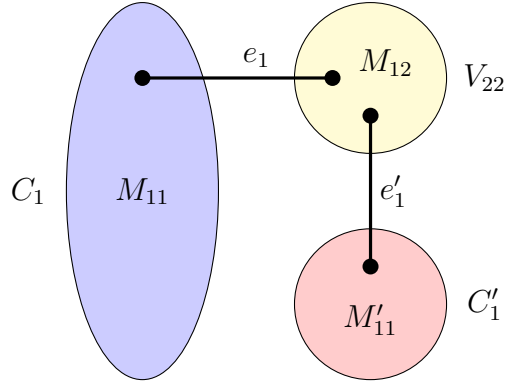

It follows that 
\begin{align}
E_{M_0}(C_1,\,C_2)\cap M_{11}&\=\{e_1\},\label{pf:e1}\rmand\\
E_{M_0'}(C_1',\,V_{22})\cap M_{11}'&\=\{e_1'\}.\label{pf:e1'}
\end{align}
In this case, we define $M''=M_1$.
From \cref{def:M:s=0,pf:e1}, we obtain \cref{cond1:M'':s=0}. It remain to verify \cref{cond2:M'':s=0}.
Recall from \cref{rl:M'} that $E_H(C_1',V_{22})\subseteq M_0'$,
we deduce that 
\[
E_H(C_1',V_{22})\sse E_{M_0'}(C_1',V_{22}).
\]
Together with \cref{eC1'V22}, we infer that 
\[
e_{M_0'}(C_1',V_{22})\gge e_{C_2}(C_1',V_{22})\gge 2.
\]
In view of \cref{pf:e1'}, we infer that $E_{M_0'}(C_1',V_{22})-M_1\ne\emptyset$.
This verifies \cref{cond2:M'':s=0}.

\smallskip
Now, suppose that the equality in \cref{ub:V22} holds.
Then 
\[
|V_{22}|\=\nt-2
\rmand
c_1\=|C_1'|\=\nf+1.
\]
In follows that the number $n/4$ is an integer.
Consider the underlying graph $F$.
On one hand, every vertex has degree at least $n/4+1$. Since $\partial_FC_1\subset M_0$,
we infer that the component~$C_1$ is isomorphic to the complete graph $K_{n/4+1}$,
and that every vertex in $C_1$ sends an edge to the component $C_2$ in the matching~$M_0$.
It follows that 
\begin{equation}\label{complete:C1C2}
e_{M_0}(C_1,\,C_2)\=\nf+1.
\end{equation}
%Along the same lines, the component~$C_1'$ is also isomorphic to $K_{n/4+1}$, and that 
%and that every vertex in~$C_1'$ sends an edge to the component~$C_2'$ in the matching~$M'$.
%It follows that 
%\begin{equation}\label{complete:C1'C2'}
%e_{M'}(C_1',\,C_2')\=\Bcl{\nf}+1.
%\end{equation}

Assume that $E_{M_0}(C_1,C_1')\ne\emptyset$.
Then we can suppose that $e_2\in E_{M_0}(C_1,C_1')$.
Since the component~$C_1$ is factor-critical,
the subgraph \hbox{$F[C_1-V(e_2)]$} has a perfect matching, say, $M_{21}$.
Since the component~$C_1'$ is factor-critical,
the subgraph \hbox{$F[C_1'-V(e_2)]$} has a perfect matching, say, $M_{21}'$.
Let $M_{22}$ be a perfect matching taken from the Hamiltonian cycle $H_{22}$ of the subgraph~$H[V_{22}]$.
Therefore, the graph~$F$ has the perfect matching 
$M_{21}\cup M_{21}'\cup M_{22}\cup \{e_2\}$.
See \cref{fig:M2:s=0}.
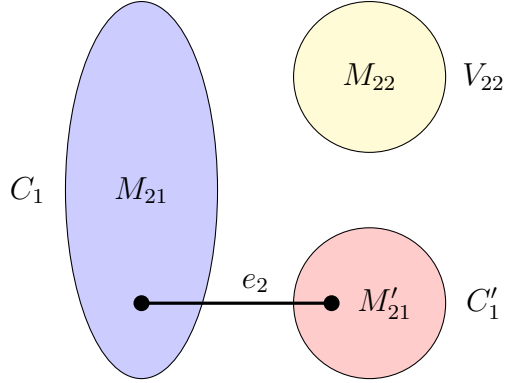
\begin{figure}[htbp]
\centering
\begin{tikzpicture}
\coordinate (C1) at (-.5, 2.5);
\coordinate (C1') at (5.5, 1);
\coordinate (V22) at (5.5, 4);
\coordinate (L1) at (1, 1);
\coordinate (L2) at (1, 4);
\coordinate (R1) at (3.5, 1);
\coordinate (R2) at (4, 1.5);
\coordinate (R3) at (4, 3.5);
\coordinate (R4) at (3.5, 4);
\coordinate (eVC1) at (2.5, 4);
\coordinate (eVC1') at (4.3, 2.5);
\coordinate (eCC) at (2.5, 1);

\draw[fill=blue!20] (1, 2.5) ellipse [x radius=1cm, y radius=2.5cm]; %C1
\draw[fill=red!20] (4, 1) circle [radius=1cm]; %C1'
\draw[fill=yellow!20] (4, 4) circle [radius=1cm]; %V22
%\draw[help lines,use as bounding box] (0, 0) grid (12,5);
\draw[very thick]%matching
(L1) -- (R1)
;
\draw
(C1)node[]{$C_1$}
(C1')node[]{$C_1'$}
(V22)node[]{$V_{22}$}
(1, 2.5)node[]{$M_{21}$}
(4.2, 1)node[]{$M_{21}'$}
(4, 4)node[]{$M_{22}$}
(eCC)node[above]{$e_2$}
;
\foreach \point in {L1, R1}
\fill[black,opacity=1] (\point) circle (3pt);
\end{tikzpicture}
\caption{The perfect matching $M_{21}\cup M_{21}'\cup M_{22}\cup \{e_2\}$.}\label{fig:M2:s=0}
\end{figure}

In this case, we define $M''=M_2$. 
By \cref{def:M:s=0} and the fact $M_2\cap M_0=\{e_2\}$, we verify \cref{cond1:M'':s=0}.
By \cref{eC1'V22} and the fact $M_2\cap M_0'=\emptyset$, we verify \cref{cond2:M'':s=0}.

Otherwise, all edges with one end in the component $C_1$ must have the other end in the set~$V_{22}$.
By \cref{complete:C1C2}, we have $e_{M_0}(C_1,V_{22})\ge n/4+1$.
Recall from \cref{clm:e1e1':s=0} that $e_1'\in E_{M_0'}(C_1',V_{22})$.
With the assumption $|V_{22}|=n/2-2$,
we may choose an edge $e_3\in E_{M_0}(C_1,V_{22})$ such that 
the subgraph $H_{22}-V(e_3)-V(e_1')$ consists of two paths of even orders. 
Consequently, the subgraph $H_{22}-V(e_3)-V(e_1')$ has a perfect matching, say, $M_{32}$.
Since the subgraph $C_1$ is factor-critical,
the subgraph $C_1-V(e_3)$ has a perfect matching, say, $M_{31}$.
Therefore, the graph~$F$ has the perfect matching
$M_{31}\cup M_{11}'\cup M_{32}\cup\{e_3,\,e_1'\}$.
See \cref{fig:M3:s=0}.
\begin{figure}[htbp]
\centering
\begin{tikzpicture}
\coordinate (C1) at (-.5, 2.5);
\coordinate (C1') at (5.5, 1);
\coordinate (V22) at (5.5, 4);
\coordinate (L1) at (1, 1);
\coordinate (L2) at (1, 4);
\coordinate (R1) at (3.5, 1);
\coordinate (R2) at (4, 1.5);
\coordinate (R3) at (4, 3.5);
\coordinate (R4) at (3.5, 4);
\coordinate (eVC1) at (2.5, 4);
\coordinate (eVC1') at (4.3, 2.5);

\draw[fill=blue!20] (1, 2.5) ellipse [x radius=1cm, y radius=2.5cm]; %C1
\draw[fill=red!20] (4, 1) circle [radius=1cm]; %C1'
\draw[fill=yellow!20] (4, 4) circle [radius=1cm]; %V22
%\draw[help lines,use as bounding box] (0, 0) grid (12,5);
\draw[very thick]%matching
(L2) -- (R4)
(R2) -- (R3)
;
\draw
(C1)node[]{$C_1$}
(C1')node[]{$C_1'$}
(V22)node[]{$V_{22}$}
(1, 2.5)node[]{$M_{31}$}
(4, .8)node[]{$M_{11}'$}
(4.2, 4.2)node[]{$M_{32}$}
(eVC1)node[above]{$e_3$}
(eVC1')node[]{$e_1'$}
;
\foreach \point in {L2, R2,R3,R4}
\fill[black,opacity=1] (\point) circle (3pt);
\end{tikzpicture}
\caption{The perfect matching $M_{31}\cup M_{11}'\cup M_{32}\cup\{e_3,\,e_1'\}$.}\label{fig:M3:s=0}
\end{figure}
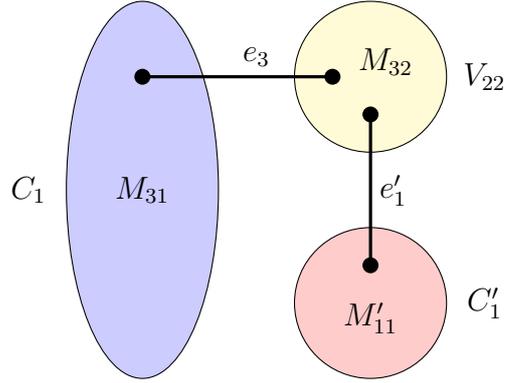

In this case, we define $M''=M_3$. 
By \cref{def:M:s=0} and the fact $M_3\cap M_0=\{e_3\}$, we verify \cref{cond1:M'':s=0}.
By \cref{eC1'V22} and the fact $M_3\cap M_0'=\{e_1'\}$, we verify \cref{cond2:M'':s=0}.
This proves \cref{clm:M'':s=0}.
\qed
\medskip

Let $M''$ be a perfect matching of the graph $F$ chosen subject to \cref{cond1:M'':s=0,cond2:M'':s=0}.
By \cref{clm:CC:s=0}, we can suppose that 
the graph $H''=F-M''$ consists of the factor-critical components~$C_1''$ and~$C_2''$ such that 
\begin{equation}\label[ineq]{lb:c1'':s=0}
\Bcl{\nf}+1\lle |C_1''|\lle |C_2''|.
\end{equation}

\begin{clm}\label{clm:V22:s=0}
We have $V(C_1'')\subseteq V_{22}$.
\end{clm}

By \cref{lem:CC:s=0,cond1:M'':s=0}, we obtain
\begin{equation}\label[rl]{C1''subsetC2}
V(C_1'')\;\subset\; V(C_2).
\end{equation}
On the other hand,
we apply \cref{lem:CC:s=0} by replacing the triple $(H,\,M,\,M')$ in its statement by the triple $(H',\,M_0',\,M'')$.
Let us check the conditions of \cref{lem:CC:s=0} one by one.
First, from the definition $H'=(H\cup M_0)-M_0'$, 
the graph $H'$ has minimum degree $\delta(H)\ge\cl{n/4}$,
consists of factor-critical components~$C_1'$ and~$C_2'$ with $|C_1'|\le |C_2'|$,
and has no intersection with the perfect matching $M_0'$.
Second, from definition, the graph 
\[
(H'\cup M_0')-M''\=F-M''\=H''
\]
consists of factor-critical components $C_1''$ and $C_2''$ with $|C_1''|\le |C_2''|$.
Therefore, by \cref{lem:CC:s=0,cond2:M'':s=0}, we obtain 
\begin{equation}\label[rl]{C1''subsetC2'}
V(C_1'')\;\subset\; V(C_2').
\end{equation}
Combining \cref{C1''subsetC2,C1''subsetC2'}, we find
\[
V(C_1'')\;\subseteq\; V(C_2)\cap V(C_2')\=V_{22}.
\]
This proves \cref{clm:V22:s=0}.
\qed
\medskip

By \cref{clm:V22:s=0}, the vertex set $V_{22}$ is decomposed into two parts as
\[
V_{22}\=V(C_1'') \sqcup W,
\]
where the vertex set $W$ is defined by the above decomposition.
Note that all the orders $c_2$, $|C_1'|$, and $|C_1''|$ are odd. From definition, we find the order 
\[
|W|\=c_2-|C_1'|-|C_1''|
\] 
is odd, which implies that $W\ne\emptyset$.
By \cref{rl:M'}, we have
\[
E_H(W,\,C_1')\sse \partial_{C_2}C_1'\sse M_0'.
\]
%\begin{equation}\label[rl]{W'}
%E_H(W,\,C_1')\sse \partial_{C_2}C_1'\sse M'.
%\end{equation}
Similarly, we have
\[
E_H(W,\,C_1'')\sse \partial_{C_2}C_1''\sse M''.
\]
%\begin{equation}\label[rl]{W''}
%E_H(W,\,C_1'')\sse \partial_{C_2}C_1''\sse M''.
%\end{equation}
By the above two relations, we find that every vertex in the set $W$ has at most two neighbors outside $W$ in the component $C_2$.
By \cref{range:deg}, every vertex in $W$ has degree at least $\cl{n/4}-2$.
It follows that $|W|\gge \cl{n/4}-1$.
By \cref{lb:c1'':s=0}, we infer that 
\begin{equation}\label[ineq]{pf:V22:s=0}
|V_{22}|\=|C_1''|+|W|\gge \Bg{\nf+1}+\Bg{\nf-1}\=\nt,
\end{equation}
contradicting \cref{ub:V22}. 
\end{case}

This completes the proof of \cref{thm:main}.
\end{proof}

The sharpness of the number $D_n$ in \cref{thm:main} 
can be seen from the $(D_n-1)$-regular graph without perfect matchings
pointed out by Csaba et al.~\cite{CKLOT14X}.
In fact,
when the integer~$n/2$ is odd, consider the disjoint union of two cliques of order $n/2$;
when~$n/2$ is even, consider the graph obtained from the disjoint union of cliques 
of orders $(n/2-1)$ and $(n/2+1)$ by deleting a Hamiltonian cycle in the larger clique.

The sharpness of the bound $\cl{n/4}$ in \cref{thm:main} 
can be seen in the sense of \cref{thm:sharp1}.

\begin{thm}\label{thm:sharp1}
Let $n\ge 34$ be an even integer.
There exists a $\{D_n,\,D_n+1\}$-graph of order $n$ having exactly $\cl{n/4}$ disjoint perfect matchings. 
\end{thm}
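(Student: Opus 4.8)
The lower bound costs nothing: any graph produced by the construction is a $\{D_n,D_n+1\}$-graph of order $n\ge 34$, so \cref{thm:main} already supplies at least $\cl{n/4}$ disjoint perfect matchings. Hence the whole content of the statement is the reverse inequality, and the plan is to exhibit a single explicit $\{D_n,D_n+1\}$-graph — in fact a $D_n$-regular one, since $D_n=2\cl{n/4}-1$ is odd — that admits no family of more than $\cl{n/4}=(D_n+1)/2$ disjoint perfect matchings. For the construction I would reverse-engineer the extremal configuration isolated in \cref{case:s=0}: take two odd ``halves'' $C_1,C_2$ with $|C_1|+|C_2|=n$, each nearly complete internally, and link them by a carefully chosen bridge $E_G(C_1,C_2)$ so that the whole graph becomes $D_n$-regular while the bridge remains the only connection between the two odd parts. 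After removing $\cl{n/4}$ perfect matchings one wants to be left with exactly the disconnected-odd-components obstruction, so that no further matching survives.

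The natural first attempt at the upper bound is the odd--odd cut argument. Since $|C_1|$ and $|C_2|$ are both odd, every perfect matching of $G$ meets $E_G(C_1,C_2)$ in an odd, hence positive, number of edges; as disjoint matchings use disjoint edges, the number of disjoint perfect matchings is at most $e_G(C_1,C_2)$. One would then shrink the bridge as far as the degree condition allows and read off the bound.

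The main obstacle is that this cut bound is too weak by a factor of two, and closing that gap is the heart of the matter. In \emph{any} $\{D_n,D_n+1\}$-graph an odd--odd cut must contain at least $D_n$ edges: a vertex of the smaller part incident to no crossing edge would have to realise its $D_n$ edges inside a part of order at most $\nt$, which is smaller than $D_n+1=2\cl{n/4}$, an impossibility; so essentially every vertex of the smaller half contributes a crossing edge. The same phenomenon defeats every Tutte-type obstruction — averaging over the odd components $C$ of $G-S$ gives only $m\le e_G(\bigcup C,S)/(\text{number of components})\ge D_n$ as well — so no cut or deficiency bound can ever beat $m\le D_n$, whereas we need $m\le\cl{n/4}$. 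To overcome this I would build $C_1$ and $C_2$ so that they fail to be factor-critical in a controlled fashion (deleting a single vertex already leaves a matching deficiency), forcing each perfect matching of $G$ to send strictly more than one vertex of $C_1$ across the bridge; this raises the average number of crossings per matching from $1$ towards $2$. The delicate point — the step I expect to be hardest — is to balance this enforced crossing multiplicity against the exact cardinality of $E_G(C_1,C_2)$ so that the count lands on \emph{precisely} $\cl{n/4}$ rather than overshooting below it, which I anticipate will require an exact edge-count inside each half together with the parity of the crossings, in the spirit of \cref{sumM>=} but applied as an upper bound rather than a lower one.
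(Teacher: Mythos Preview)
Your proposal is a plan rather than a proof, and the plan founders on the very obstacle you identify. In a $\{D_n,D_n+1\}$-graph split into two odd parts $C_1,C_2$, essentially every vertex of the smaller part must send an edge across (otherwise it would need $D_n$ neighbours inside a set of size at most $D_n$), so $e_G(C_1,C_2)\ge D_n\approx n/2$ and the naive cut bound is off by a factor of two. Your proposed remedy --- sabotaging factor-criticality so that each matching crosses at least three times --- cannot work here: the internal minimum degree of each $C_i$ is at least $D_n$ minus the maximum number of crossing edges at a vertex, which is well above $c_i/2$, so deleting any single vertex leaves a graph satisfying Dirac's condition and hence containing a perfect matching; each $C_i$ is \emph{forced} to be factor-critical. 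More fundamentally, your decision to seek a $D_n$-regular example is fatal for large~$n$: by \cref{thm:CKLOT} every $D_n$-regular graph of sufficiently large even order is $1$-factorizable, hence has $D_n=2\cl{n/4}-1>\cl{n/4}$ disjoint perfect matchings. The extremal graph must be genuinely semi-regular.

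The paper's construction is completely different and avoids odd cuts altogether. Start from the complete bipartite graph $K_{n/2-1,\,n/2+1}$ with parts $A$ (small) and $B$ (large), and keep $E(G[A])=\emptyset$. In any perfect matching each vertex of $A$ is matched into $B$, leaving exactly two vertices of $B$ to be matched to each other; so every perfect matching uses \emph{exactly one} edge of $G[B]$, and disjoint perfect matchings use disjoint such edges. It remains only to adjust a few edges so that $G$ becomes $\{D_n,D_n+1\}$-regular with $|E(G[B])|=\cl{n/4}$: when $n/2$ is odd one adds a perfect matching on $B$, and when $n/2$ is even one deletes a maximal matching of $K$ and adds $\cl{n/4}$ edges inside $B$ to restore the degrees. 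The bottleneck is the tiny edge set $E(G[B])$, whose cardinality is under direct control --- no parity or deficiency juggling is needed.
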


\begin{proof}
Let $n\ge 34$ and denote $D=D_n$.
By \cref{thm:main}, 
it suffices to construct a $\{D,D+1\}$-graph of order $n$ having at most $\cl{n/4}$ disjoint perfect matchings. 
Let $K$ be the complete bipartite graph with part orders \hbox{$|A|=n/2-1$} and $|B|=n/2+1$. 

Suppose that the integer $n/2$ is odd. Then we have $D=n/2$ from \cref{def:D}.
Define~$G_1$ to be the graph obtained from the graph~$K$ by adding 
a perfect matching~$M_1$ that covers the vertex set~$V(B)$.
%From definition, 
%any vertex in the part~$A$ has degree $(n/2+1)$,
%and that every vertex in the part~$B$ has degree~$n/2$.
Then the graph~$G_1$ is a $\{D,D+1\}$-graph of order~$n$.
It is clear that every matching of~$G_1$ contains exactly one edge in the subgraph~$G_1[B]$.
Hence, the cardinality of the maximum family of disjoint perfect matchings of the graph~$G_1$ is at most $|M_1|=n/4$.
In this case, the graph~$G_1$ is a desired graph.

Otherwise, the integer $n/2$ is even and $D=n/2-1$.
Let~$M$ be a maximal matching of the graph~$K$. 
Define~$G_2$ to be the graph obtained from the graph $K-M$ by adding 
a minimal edge set~$E_2$ that covers the vertex set $V(M)-V(A)$.
%From definition, the degree of any vertex in the part~$A$ is $n/2$,
%and that the degree of any vertex in the part~$B$ is either~$n/2-1$ or~$n/2$.
Then the graph~$G_2$ is a $\{D,D+1\}$-graph of order~$n$.
It is clear that every matching of~$G_2$ contains exactly one edge in the subgraph~$G_2[B]$.
Hence, the number of disjoint perfect matchings of $G_2$ is at most
\[
|E_2|\=\bggcl{\frac{|V(M)|-V(A)}{2}}\=\Bcl{\frac{n/2-1}{2}}\=\Bcl{\nf}.
\]
In this case, the graph $G_2$ is qualified.
This completes the proof.
\end{proof}

\begin{cor}\label{cor:main}
Let $n$ be an even integer, and let $D\ge D_n$.
Then every $\{D,\,D+1\}$-graph of order $n$ contains $\cl{(D+1)/2}$ disjoint perfect matchings.
\end{cor}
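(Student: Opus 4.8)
The plan is to deduce \cref{cor:main} from \cref{thm:main} by a doubling-down reduction on the degree. The target is $\cl{(D+1)/2}$ disjoint perfect matchings in an arbitrary $\{D,\,D+1\}$-graph $G$ of order $n$ with $D\ge D_n$. First I would check the base case $D=D_n$ directly: then $\cl{(D_n+1)/2}=\cl{(2\cl{n/4})/2}=\cl{n/4}$, so \cref{thm:main} already yields the required $\cl{n/4}$ disjoint perfect matchings, provided $n\ge34$. The separate small-order range should be accommodated by noting that for even $n$ with $D\ge D_n$ the claim is only asserted for the same $n$ for which \cref{thm:main} supplies matchings, so no new range of $n$ enters.

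The key step is the inductive increment in $D$. Suppose the statement holds for some degree pair up to $\{D,\,D+1\}$, and consider a $\{D+1,\,D+2\}$-graph $G$. The plan is to peel off a single perfect matching and recurse. Concretely, I would first verify that $G$ has a perfect matching $M_0$: since $D+1\ge D_n+1\ge \cl{n/4}+1>n/2$ fails in general, one instead invokes \cref{thm:main} applied to a suitable $\{D_n,\,D_n+1\}$-subgraph, or more cleanly observes that $\delta(G)\ge D+1\ge D_n+1$ already forces the existence of at least one perfect matching by \cref{thm:main} itself (which guarantees $\cl{n/4}\ge1$ of them). Removing $M_0$ yields $G-M_0$, whose vertices now have degree in $\{D,\,D+1\}$, i.e.\ $G-M_0$ is a $\{D,\,D+1\}$-graph. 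By the induction hypothesis, $G-M_0$ contains $\cl{(D+1)/2}$ disjoint perfect matchings; adjoining $M_0$ produces $\cl{(D+1)/2}+1$ disjoint perfect matchings in $G$. It remains to confirm the arithmetic $\cl{(D+1)/2}+1=\cl{(D+2)/2}$, which holds because incrementing $D$ by $2$ raises $\cl{(D+1)/2}$ by exactly $1$; to get each unit increase of the ceiling I would step $D$ up in increments of $2$ and handle the parity bookkeeping so that each removed matching contributes one more to the count.

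The main obstacle is ensuring at every stage of the peeling that the residual graph still falls under the hypotheses of \cref{thm:main} or the induction hypothesis, i.e.\ that its minimum degree never drops below $D_n$. Since each removed perfect matching lowers every degree by exactly $1$, after removing $k$ matchings the degrees lie in $\{D+1-k,\,D+2-k\}$, and I must stop before the minimum degree falls under $D_n$. The clean way to organize this is downward: start from $G$ with degrees in $\{D+1,\,D+2\}$ where $D+1\ge D_n$, repeatedly extract a perfect matching until the degrees reach exactly $\{D_n,\,D_n+1\}$, invoke \cref{thm:main} for the final $\cl{n/4}$ matchings, and count the total. The number of matchings peeled is $(D+1)-D_n$ plus parity adjustments, and the final tally $((D+1)-D_n)+\cl{n/4}$ must be shown to equal $\cl{(D+1)/2}$; substituting $D_n=2\cl{n/4}-1$ reduces this to the identity $(D+1)-(2\cl{n/4}-1)+\cl{n/4}=\cl{(D+1)/2}$, which I would verify by the elementary computation $\cl{(D+1)/2}=(D+1)-\cl{n/4}+1$ valid once $D\equiv D_n\pmod 2$ and corrected by one in the opposite parity. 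The one genuinely delicate point is guaranteeing that a perfect matching exists at each intermediate degree; I would secure this by applying \cref{thm:main} to the intermediate graph rather than by a standalone factor-existence argument, so the recursion never leaves the regime covered by the main theorem.
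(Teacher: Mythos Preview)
Your overall strategy (peel off perfect matchings one at a time until the degrees drop to $\{D_n,D_n+1\}$, then invoke \cref{thm:main}) is exactly the paper's, and the final count $(D-D_n)+\cl{n/4}=D-\cl{n/4}+1\ge \cl{(D+1)/2}$ is the same. The real difference, and the place where your argument has a gap, is the justification for the existence of a perfect matching at each intermediate step.

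You dismiss the Dirac route with the remark that ``$D+1\ge D_n+1\ge \cl{n/4}+1>n/2$ fails in general'', but this is an arithmetic slip: $D_n+1=2\cl{n/4}$, not $\cl{n/4}+1$, and $2\cl{n/4}\ge n/2$ always holds. So whenever the current graph is $\{D',D'+1\}$-regular with $D'>D_n$, its minimum degree is at least $D_n+1\ge n/2$, Dirac's \cref{thm:Dirac} gives a Hamiltonian cycle, and a perfect matching falls out. This is precisely how the paper secures each peeling step.

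Your proposed substitute, namely ``applying \cref{thm:main} to the intermediate graph'', does not work as stated: \cref{thm:main} is formulated only for $\{D_n,D_n+1\}$-graphs, not for $\{D',D'+1\}$-graphs with $D'>D_n$, and there is no obvious way to produce a $\{D_n,D_n+1\}$-regular spanning subgraph to which it could be applied. Likewise, in your inductive version, establishing the corollary for $\{D+1,D+2\}$-graphs requires a first perfect matching \emph{before} you can drop to the $\{D,D+1\}$ case covered by the induction hypothesis, so the induction hypothesis itself cannot supply it. Once you reinstate the Dirac argument for this step, your proof coincides with the paper's. (Your identity $\cl{(D+1)/2}+1=\cl{(D+2)/2}$ is also false for even $D$; the paper sidesteps this by proving the inequality $D-\cl{n/4}+1\ge\cl{(D+1)/2}$ directly rather than tracking parity through an induction.)
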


\begin{proof}
Let $G$ be a $\{D,\,D+1\}$-graph of order $n$.
If $n=2$, then $G$ is isomorphic to the complete graph of order two, which has a perfect matching certainly.
Otherwise $n\ge 4$.
If $D>D_n$, then the minimum degree 
\[
\delta(G)\=D\gge D_n+1\=2\Bcl{\nf}\gge \nt.
\]
By Dirac's \cref{thm:Dirac}, the graph $G$ is Hamiltonian, and thus has a perfect matching, say, $M_1$.
Now, consider the graph $G_1=G-M_1$. It is clear that the graph $G_1$ is \hbox{$\{D-1,\,D\}$}-regular.
If $D-1>D_n$, then we can choose a perfect matching $M_2$ from the graph $G_1$ for the same reason.
Continuing in this way, we obtain disjoint perfect matchings $M_1,M_2,\ldots,M_{D-D_n}$, and the 
$\{D_n,\,D_n+1\}$-graph
\[
G_{D-D_n}\=G-M_1-M_2-\cdots-M_{D-D_n}.
\] 
By \cref{thm:main}, the graph $G_{D-D_n}$ has a family $\M$ of $\cl{n/4}$ disjoint perfect matchings. 
Hence, the graph $G$ has the family $\M\cup M_1\cup M_2\cup\cdots\cup M_{D-D_n}$ of 
\[
D-D_n+\Bcl{\nf}\=D-\Bcl{\nf}+1\gge \Bcl{\frac{D+1}{2}}
\]
disjoint perfect matchings.
\end{proof}

\section{Concluding remarks}

Note that semi-regular graphs are certainly general graphs,
for which Csaba et al.~\cite{CKLOT14X} also presented a sharp bound 
for the maximum number of disjoint perfect matchings.

\begin{thm}[Csaba et al.]\label{thm:CKLOT2}
For sufficiently large even integer $n$,
any graph of order~$n$ with minimum degree at least $n/2$ 
contains at least $(n-2)/8$ disjoint Hamiltonian cycles.
\end{thm}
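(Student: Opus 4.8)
The plan is to reduce the statement to a factorization problem and then to a Hamilton-decomposition problem. Since a Hamilton cycle is a $2$-regular spanning subgraph, any $r$-regular spanning subgraph that decomposes into Hamilton cycles yields $r/2$ edge-disjoint Hamilton cycles; so to reach the bound $(n-2)/8$ it suffices to locate an even-regular spanning subgraph $R$ of degree $r\gge(n-2)/4$ and to Hamilton-decompose it. First I would treat the existence of $R$ as a purely combinatorial matter, via the defect form of Tutte's $f$-factor theorem: the hypothesis $\delta(G)\gge\nt$ forces, for every vertex subset $X$, the cut lower bound $|\partial_G X|\gge |X|\,(\nt-|X|+1)$, and a short case analysis on the Tutte obstruction $(S,T)$ shows that no pair $(S,T)$ can rule out an $r$-factor for the relevant even $r\approx(n-2)/4$. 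The minimum-degree hypothesis enters only here, and the constant $(n-2)/8$ is pinned down precisely by this step; its sharpness is exhibited by the extremal graphs of Csaba et al.~\cite{CKLOT14X}.

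The decisive step is to convert edge-regularity into Hamilton cycles. The naive approach of peeling Hamilton cycles off one at a time using Dirac's \cref{thm:Dirac}, as was done for perfect matchings in \cref{cor:main}, fails at the very first step: deleting one Hamilton cycle drops the minimum degree to $\nt-2$, below the Dirac threshold, so the induction cannot even begin. What survives the repeated deletion of a bounded amount of degree is not the Dirac condition but robust expansion, in the sense of Kühn and Osthus. I would therefore show that $G$, after setting aside a bounded number of exceptional vertices, is a robust expander, use the fact that a robust expander of linear degree is Hamiltonian, and then extract Hamilton cycles one batch at a time, checking at each stage that the leftover graph remains a robust expander of sufficiently high degree. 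Because robust expansion degrades far more gracefully than the Dirac condition, this extraction can be continued until the even-regular degree budget certified in the first step is exhausted, producing $\fl{r/2}\gge(n-2)/8$ edge-disjoint Hamilton cycles.

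The technical heart, and the step I expect to be the main obstacle, is to make the extraction exact while simultaneously covering the exceptional vertices, whose neighbourhoods are the most constrained. For this I would combine Szemer\'edi's regularity lemma with the absorbing method. Applying the regularity lemma and passing to a reduced graph that inherits the degree condition gives a bounded collection of super-regular pairs in which almost-spanning systems of edge-disjoint paths and cycles can be built greedily. Before doing so, one reserves a small, robustly constructed absorbing structure capable of swallowing any small leftover set of vertices or edges; the robust-expander connectivity is then used to splice the path systems, together with the absorbers, into genuine Hamilton cycles spanning every vertex. The delicate points are controlling the accumulated error terms so that the final tally is exactly $\fl{r/2}$ and not fewer, and absorbing the exceptional vertices without wasting degree that the count cannot afford to lose. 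This is precisely the content of the argument of Csaba et al.~\cite{CKLOT14X}, and I do not expect a significantly shorter route to the sharp constant.
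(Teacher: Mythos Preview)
The paper does not prove \cref{thm:CKLOT2} at all: it is stated in the concluding remarks purely as a citation of Csaba, K\"uhn, Lo, Osthus and Treglown~\cite{CKLOT14X}, with no accompanying argument. There is therefore no ``paper's own proof'' to compare your proposal against; the theorem functions here only as context for discussing the relationship between \cref{thm:main} and the existing literature.

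That said, your sketch is a reasonable high-level summary of the actual strategy in~\cite{CKLOT14X}: pass to an approximately regular spanning subgraph, use robust expansion (in the K\"uhn--Osthus sense) rather than the raw Dirac condition so that Hamiltonicity survives repeated edge deletion, and combine the regularity lemma with absorbing structures to handle exceptional vertices and make the count exact. Where your outline is loosest is the first step: the reduction to an even $r$-factor via Tutte's $f$-factor theorem is not how~\cite{CKLOT14X} proceeds, and the short case analysis you allude to would not by itself deliver the sharp constant; in the actual proof the regularity and robust-decomposition machinery do most of the work of producing the correct number of cycles directly. But since the present paper makes no claim to reprove this result, none of this is required here.
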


We point out that \cref{thm:CKLOT2} has intersection with our \cref{thm:main},
and that none of them covers the other. The differences include the following.
\begin{itemize}
\item
\cref{thm:main} involves the case $D_n=n/2-1$, while \cref{thm:CKLOT2} does not.
In particular, the bound $n/2$ for the minimum degree in \cref{thm:CKLOT2} is sharp; while in our result,
$\{n/2-1,\,n/2\}$-graphs has minimum degree $n/2-1$.
\item
For $D_n=n/2$, \cref{thm:main} says every $\{D_n,\,D_n+1\}$-graph contains $\cl{n/4}=(n+2)/4$
disjoint perfect matchings, while \cref{thm:CKLOT2} implies only $2\cdot (n-2)/8=(n-2)/4$ disjoint perfect matchings;
\item 
\cref{thm:main} holds true for all even integers $n\ge 34$, while \cref{thm:CKLOT2} is valid for sufficient large $n$.
\end{itemize}

%\begin{cor}
%Let $G$ be a semi-regular graph of even order $n\ge 34$.
%Suppose that neither the graph $G$ nor its complement contains $\cl{n/4}$ disjoint perfect matchings,
%then $n\equiv 2\pmod4$ and the graph $G$ is a $\{n/2-1,\,n/2\}$-graph.
%\DWr{Is it possible to determine the structure of such graphs $G$?}
%\end{cor}
%
%\begin{proof}
%Let $G$ be a $\{k,\,k+1\}$-graph of even order $n\ge 34$. 
%Then the complement graph~$G^c$ is a $\{n-k-2,\,n-k-1\}$-graph.
%From the premise and \cref{thm:main}, we can suppose that both 
%the numbers~$k$ and $(n-k-2)$ are at most $D_n-1$.
%Then, it is elementary to derive that $k=n-k-2=D_n-1$,
%which implies that $n\equiv 2\pmod4$ and $k=n/2-1$.
%This completes the proof.
%\end{proof}
%

\end{document}